\documentclass[11pt]{amsart}

\usepackage{amssymb, amsmath, amsfonts, amsthm}
\usepackage{amsfonts}
\usepackage{tabularx,booktabs}
\usepackage{amsmath,tikz}
\usepackage{amsfonts}
\usepackage{todonotes}

\usepackage{ esint }
\usepackage[abbrev]{amsrefs}

\usepackage{amscd}
\usepackage{amsthm}
\usepackage{esint}
\usepackage{amssymb}
\usepackage{latexsym}
\usepackage{eufrak}
\usepackage{euscript}
\usepackage{epsfig}
\usepackage{array}
\usepackage{enumerate}
\usepackage{dsfont}
\usepackage{color}
\usepackage{wasysym}
\usepackage{hyperref}
\usepackage{pdfsync}
\usepackage[linesnumbered,boxed,vlined,ruled]{algorithm2e}

\setlength{\voffset}{-1in}
\setlength{\topmargin}{1.5cm}
\setlength{\hoffset}{-1in}
\setlength{\oddsidemargin}{2cm}
\setlength{\evensidemargin}{2cm}
\setlength{\textwidth}{17cm}
\setlength{\textheight}{24cm}

\newcommand{\ea}{\end{eqnarray}}

\newcommand{\qe}{\end{equation}}
\newcommand{\R}{{\mathbb R}}
\newcommand{\N}{{\mathbb N}}

\def \d {\partial}

\def \eps {\varepsilon}

\DeclareMathOperator{\Ric}{Ric}

\DeclareMathOperator{\vol}{vol}
\DeclareMathOperator{\Vol}{vol}
\DeclareMathOperator{\Hess}{Hess}

\DeclareMathOperator{\II}{II}
\DeclareMathOperator{\diam}{diam}

\DeclareMathOperator{\spec}{spec}
\DeclareMathOperator{\Sec}{Sec}
\DeclareMathOperator{\esssup}{esssup}

\newcommand{\f}[2]{\frac{#1}{#2}}

\newcommand{\drm}{\mathrm{d}}
\newcommand{\dvol}{\mathrm{dvol}}
\newcommand{\cA}{\mathcal{A}}

\newcommand{\Hmm}[1]{\leavevmode{\marginpar{\tiny%
$\hbox to 0mm{\hspace*{-0.5mm}$\leftarrow$\hss}%
\vcenter{\vrule depth 0.1mm height 0.1mm width \the\marginparwidth}
\hbox to 0mm{\hss$\rightarrow$\hspace*{-0.5mm}}$\\\relax\raggedright #1}}}

\theoremstyle{definition}
\newtheorem{theorem}{Theorem}[section]
\newtheorem{definition}[theorem]{Definition}

\newtheorem{lemma}[theorem]{Lemma}
\newtheorem{remark}[theorem]{Remark}

\newtheorem{proposition}[theorem]{Proposition}

\numberwithin{equation}{section}
\begin{document}

\title[Integral curvature and mass gaps]{Integral Ricci curvature and the mass gap of Dirichlet Laplacians on domains}

\author{Xavier Ramos Oliv\'{e}}
\email{\href{mailto:xramosolive@wpi.edu}{xramosolive@wpi.edu}}
\address{Xavier Ramos Oliv\'{e}: Department of Mathematical Sciences, Worcester Polytechnic Institute, Worcester, MA 01609}

\author{Christian Rose}
\email{\href{mailto:christian.rose@uni-potsdam.de}{christian.rose@uni-potsdam.de}}
\address{Christian Rose: Institute of Mathematics, University of Potsdam, Germany}

\author{Lili Wang}
\email{\href{mailto:lilyecnu@outlook.com}{lilyecnu@outlook.com}}
\address{Lili Wang: College of Mathematics and Information, FJKLMAA, Fujian Normal University, Fuzhou, 350108, China}

\author{Guofang Wei}
\email{\href{mailto:wei@math.ucsb.edu}{wei@math.ucsb.edu}}
\address{Guofang Wei: Department of Mathematics\\
         University of California\\
         Santa Barbara, CA 93106}

\thanks{ G.W. is partially supported by NSF
DMS 1811558, 2104704 }

\begin{abstract}
We obtain a fundamental gap estimate for classes of bounded domains with quantitative control on the boundary in a complete manifold with integral bounds on the negative part of the Ricci curvature. This extends the result of \cite{Oden-Sung-Wang99} to $L^p$-Ricci curvature assumptions, $p>n/2$. To achieve our result, it is shown that the domains under consideration are John domains, what enables us to obtain an estimate on the first nonzero Neumann eigenvalue, which is of independent interest. 

\bigskip
\noindent \textbf{Keywords.}  eigenvalue estimate, spectral gap, mass gap, integral Ricci curvature\\
\end{abstract}

\maketitle
\section{Introduction}
Consider a complete Riemannian manifold $M$ of dimension $n\in\N$, $n\geq 2$, and $\Omega\subset M$ open such that $\overline\Omega$ is a smooth compact manifold of dimension $n$ with smooth boundary $\partial\Omega$. The Dirichlet Laplacian $-\Delta\geq 0$ 
has discrete spectrum consisting of an unbounded increasing sequence of positive reals $\lambda_i(\Omega)$, $i\in\N$,  which can be arranged as
\[
0<\lambda_1(\Omega)<\lambda_2(\Omega)\leq \lambda_3(\Omega)\leq \ldots\to \infty 
\]
counting multiplicities. For fixed $\Omega\subset M$ as above, the difference
\[
\Gamma(\Omega) := \lambda_2(\Omega)-\lambda_1(\Omega)>0
\]
is called the \emph{mass gap} (or \emph{fundamental gap}) of the Dirichlet Laplacian on $\Omega$. In quantum mechanics, the mass gap represents the energy needed to jump from the ground state to the next lowest energy state. The subject has a long history and has been a very active area of research recently, see e.g. the survey article \cite{DaiSetoWei-survey}. 

In the celebrated work \cite{andrewsclutterbuckgap}, Andrews and Clutterbuck proved the fundamental gap conjecture that for bounded and convex domain $\Omega\subset \R^n$, 
\[
\Gamma(\Omega)\geq 3\pi^2\diam (\Omega)^{-2}.
\]
This has been generalized to convex domains in  $\mathbb{S}^n$ 
in \cite{SetoWangWei16, HeWei20, DaiSetoWei18}, showing the same gap estimate. On the other hand,  it was proven in 
\cite{BourniClutterbuckNguyenStancuWeiWheeler2} that given any diameter there are convex domains $\Omega\subset\mathbb{H}^n$ with arbitrarily small fundamental gap.

For the lower bound estimate on the gap, the convexity condition is essential. In \cite{Cheng-Oden97}  a lower bound on $\Gamma(\Omega)$ was derived for domains $\Omega\subset\R^n$ assuming the so-called interior rolling $R$-ball condition for $\partial\Omega$, cf.~Definition~\ref{interior rolling}, in terms of bounds on the second fundamental form and volume, where  $\Omega$ is not necessarily convex. 
This is generalized to compact manifolds in \cite{Oden-Sung-Wang99}, 
showing that the gap is bounded from below in terms of uniform lower bounds on the Ricci curvature, the diameter, sectional curvature near the boundary, and the interior rolling $R$-ball condition. The construction in \cite{BourniClutterbuckNguyenStancuWeiWheeler2} shows that the rolling $R$-ball condition is necessary there and for general manifolds the $R$-rolling ball condition is more suitable than the convexity condition in some sense.

The present paper provides a gap estimate for suitable subsets of manifolds assuming only integral bounds on the negative part of the Ricci curvature. 
Such integral curvature conditions gathered a lot of attention during the last decades because in contrast to lower Ricci curvature pointwise bounds they are more stable under perturbations of the metric, see, e.g., \cite{Aubry-07,Chen-21,ChenH-21,PetersenWei-97, PetersenWei-01, DaiWeiZhang-18,Gallot-88, Rose-17, Rose-17a, OlivePostRose-20,Wang-20,ZhangZhu-17} and the references therein.

We fix some notation in order to state our main result.
Denote by $B_r(x)$ the geodesic ball centered at $x\in M$ with radius $r>0$. Furthermore, for $\Omega\subset M$ with smooth boundary $\partial\Omega$, we let $\II:=\II_{\partial\Omega}$ be its second fundamental form with respect to the inward pointing normal. Moreover, we denote by $\Sec$ the sectional curvatures.

Define $$\rho\colon M\to \R, \quad x\mapsto \min \spec(\Ric_x),$$
where $\Ric$ is considered as pointwise endomorphism on the tangent bundle. For $x\in\R$ we denote $x_-:=\max\{0,-x\}$.
Let $p>n/2$ and define for $x\in M$ and $r>0$
$$\kappa(x,p,r):=r^2\left(\fint_{B_r(x)}\rho_-^p\right)^\f{1}{p},$$
measuring the $L^p$-mean of the negative part of the Ricci curvature in a ball with respect to the Riemannian volume form $\dvol$.
It is convenient to work with the scaling invariant curvature quantity introduced in \cite{PetersenWei-97}
\begin{align*}\label{kappa-cur-invari}
\kappa(p,r)=\sup\limits_{x\in M}\kappa(x,p,r).
\end{align*}

In this paper we deal with the following class of subsets.
\begin{definition} \label{RHK-regular} Let $M$ be a Riemannian manifold of dimension $n\in\N$, $n\geq 2$, without boundary, $H,K>0$, and $R>0$. A subset $\Omega\subset M$ is called \emph{$(R,H,K)$-regular} if 
\begin{itemize}
\item[-] $\Omega$ is open, bounded, and connected,
    \item[-] $\overline{\Omega}\neq M$ is a smooth Riemannian manifold of dimension $n$ with smooth boundary $\partial\Omega$,
    \item[-] $\partial\Omega$ satisfies $\II_{\partial\Omega}\leq H$ and the interior rolling $R$-ball condition, cf.~Definition~\ref{interior rolling},
    \item[-] $\vert\Sec\vert\leq K$ in the inner $R$-tubular neighborhood $T(\partial\Omega,R):=\{x\in \Omega| d(x,\partial\Omega)<R\}$.
\end{itemize}
\end{definition}

Our main result can now be stated as follows.
\begin{theorem}\label{main1}
Let $2p>n\geq 2$, 
$D>0$, $H,K >  0$. There exists $R_0=R_0(K,H)>0$ such that the following holds: for any $0<R\leq R_0$, there exist explicitly computable constants 
$C=C(n, p, D, R, H,K)>0$ and $\eps=\eps(n,p)>0$ such that if $M$ is a complete manifold of dimension $n$ with $\kappa(p,D)\leq \eps$, then for any $(R,H,K)$-regular $\Omega\subset M$ with $\diam \Omega \leq D$
we have
\[
\Gamma(\Omega)\geq C(n, p, D, R, H,K).
\]
\end{theorem}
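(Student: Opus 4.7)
The plan is to adapt the Oden-Sung-Wang strategy to the integral-Ricci setting, with the John-domain Neumann estimate highlighted in the abstract playing the role of the standard Buser inequality used in the pointwise case. Let $\phi_1>0$ and $\phi_2$ denote the first two Dirichlet eigenfunctions on $\Omega$, and set $v=\phi_2/\phi_1$. A direct computation gives
\[
\Delta v+2\,\nabla\log\phi_1\cdot\nabla v=-\Gamma(\Omega)\,v,
\]
with $\int_\Omega v\,\phi_1^2\,\dvol=0$ and $v$ satisfying the induced Neumann-type condition on $\partial\Omega$. By the variational characterization of the associated weighted Laplacian one has $\Gamma(\Omega)\geq\mu_1(\Omega,\phi_1^2\,\dvol)$, so the task reduces to proving a weighted Poincar\'e inequality on $\Omega$ with a constant depending only on $n,p,D,R,H,K$.

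The first step is to obtain two-sided control on $\phi_1$. Using the interior rolling $R$-ball condition, $\II\leq H$, and $|\Sec|\leq K$ on $T(\partial\Omega,R)$, and choosing $R_0(H,K)$ small enough that $d(\cdot,\partial\Omega)$ is smooth and its Hessian is controlled via Jacobi-field comparison on $T(\partial\Omega,R)$, I would build sub- and supersolutions out of the distance-to-boundary function. This yields the boundary behaviour $\phi_1(x)\asymp d(x,\partial\Omega)/R$ together with $|\nabla\phi_1|/\phi_1\leq C$ in $T(\partial\Omega,R)$. In the interior, a Moser iteration based on the Sobolev inequality available under $\kappa(p,D)\leq\eps$, in the spirit of Petersen-Wei and Dai-Wei-Zhang, upgrades $L^2$ to $L^\infty$ control and produces a matching upper bound and a pointwise lower bound $\phi_1\geq c>0$ at scale $R$ away from $\partial\Omega$.

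The second step is to verify that the hypotheses force $\Omega$ to be a John domain with constants controlled only by $R,H,K,D$, and to deduce a Neumann-Poincar\'e inequality on $\Omega$. The rolling $R$-ball condition furnishes, for every boundary point, an inward ball of radius $R$ in which the distance-to-boundary grows linearly; chaining such balls along a minimizing path joining any $x\in\Omega$ to a fixed deep interior base point yields the John condition. A Whitney-type chaining argument, using the local $(1,2)$-Poincar\'e inequality and local volume doubling that hold on balls of radius $\lesssim D$ provided $\kappa(p,D)\leq\eps$, then promotes the John condition to a global Neumann-Poincar\'e estimate with an explicit lower bound on $\mu_1(\Omega)$. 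Combining this with the two-sided bound on $\phi_1$ and treating the thin annular region $T(\partial\Omega,R)$ separately via the gradient estimate produces the desired weighted Poincar\'e inequality and concludes the proof.

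The main obstacle is the Neumann eigenvalue estimate. Under only integral Ricci bounds, global doubling and a global $(1,2)$-Poincar\'e inequality on arbitrary balls are not available, so the chaining must be carried out at the scale on which $\kappa(p,D)\leq\eps$ already forces local doubling and local Poincar\'e; one must then track all constants through this scale to ensure the John constants, and therefore the final eigenvalue bound, depend only on $(R,H,K,D)$ and not on any auxiliary global quantity. Fixing the smallness threshold $\eps=\eps(n,p)$ uniformly so that the local analysis survives through the entire chaining — and choosing $R_0(H,K)$ compatibly with the Fermi-chart construction in Step~1 — is the most delicate point.
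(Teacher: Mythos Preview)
Your overall architecture matches the paper's: verify the hypotheses of the Oden--Sung--Wang criterion (Theorem~\ref{OSW-gap-est}) under integral Ricci bounds, with the John-domain argument supplying the Neumann eigenvalue estimate and a Moser/gradient scheme supplying the Harnack control on $\phi_1$. The paper does exactly this, citing Haj{\l}asz--Koskela for the chaining and Dai--Wei--Zhang for the local Sobolev input, so your identification of the ``main obstacle'' and its resolution is on target.

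There is, however, one genuine error in your boundary-layer step. You assert that $|\nabla\phi_1|/\phi_1\leq C$ on $T(\partial\Omega,R)$ and plan to use this to handle the annulus. This bound is false: since $\phi_1\asymp d(\cdot,\partial\Omega)$ near the boundary (which you correctly derive) and $\partial_\nu\phi_1\neq 0$ on $\partial\Omega$ by the Hopf lemma, one has $|\nabla\log\phi_1|\asymp d(\cdot,\partial\Omega)^{-1}$, which blows up. Consequently the drift $2\nabla\log\phi_1\cdot\nabla v$ in your equation for $v$ is \emph{not} a bounded perturbation near $\partial\Omega$, and you cannot treat the boundary layer ``via the gradient estimate'' as stated.

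The correct replacement, and what the paper actually proves (Theorem~\ref{Harnack-inequality-esti}), is the scale-invariant Harnack inequality
\[
\phi_1(x)\leq C_3\,\phi_1(y)\qquad\text{whenever }0<d(x,\partial\Omega)\leq 2\,d(y,\partial\Omega).
\]
This follows directly from the two ingredients you already have --- the two-sided comparison $\phi_1\asymp d$ on $T(\partial\Omega,\delta)$ and the chained interior Harnack giving $\phi_1\geq c$ on $\Omega_\delta$ --- by a four-case check (both points near the boundary, both deep, or one of each). It is precisely this condition, not a pointwise gradient bound, that the Oden--Sung--Wang machinery requires to pass from the unweighted Neumann estimate on $\Omega_{R/2}$ to the weighted Poincar\'e inequality with weight $\phi_1^2$. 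Once you swap in this Harnack-type condition, your outline goes through and coincides with the paper's proof.
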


General criteria for a quantitative lower bound on the gap for compact manifolds have been provided in \cite{Oden-Sung-Wang99}, cf.~Theorem~\ref{OSW-gap-est}.
The key ingredients are volume doubling, a Harnack type  inequality for the first Dirichlet eigenfunction, weak Neumann-Poincar{\'e} inequalities on interior balls, and Neumann eigenvalue estimates for certain subsets of the manifold. We will show that all of these conditions are satisfied for $(R,H,K)$-regular domains with integral Ricci curvature bounds, which in turn yields our result. The volume doubling condition follows directly from \cite{PetersenWei-01, Ramos-Olive19}, and the Harnack inequality for the first Dirichlet eigenfunction follows from a combination of the results in \cite{DaiWeiZhang-18,Oden-Sung-Wang99,PetersenWei-97}, and \cite{PetersenWei-01}.
The crucial step to obtain the main result proved in the present article is the following new Neumann eigenvalue estimate for $(R,H,K)$-regular domains.

\begin{proposition} \label{Neumann-eigen}
Let $2p>n\geq 2$, $D>0$, and $K,H> 0$.
There exist explicitly computable $\epsilon=\epsilon(n,p)>0$, $R_0=R_0(K,H)>0$
such that the following holds: for any $R\in(0,R_0]$ there is an explicitly computable $C=C(n,R,D,H,K)>0$ such that for any Riemannian manifold of dimension $n$ with
$ \kappa_M(p,D)\leq \epsilon$,
any $(R,H,K)$-regular domain $\Omega\subset M$ with $\diam \Omega\leq D$, the first (non-zero) Neumann eigenvalue $\eta_1(\Omega)$ of $\Omega$ satisfies
\begin{equation*}
\eta_1(\Omega)\geq C.  \label{Neum-eigen}
\end{equation*}
\end{proposition}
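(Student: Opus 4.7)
By the variational principle,
\[
\eta_1(\Omega)=\inf\left\{\frac{\int_\Omega|\nabla u|^2\,\drm\vol}{\int_\Omega(u-u_\Omega)^2\,\drm\vol}\colon u\in W^{1,2}(\Omega),\ u\not\equiv\mathrm{const}\right\},
\]
so a lower bound on $\eta_1(\Omega)$ is equivalent to a uniform $L^2$ Neumann--Poincar\'e inequality on $\Omega$. I propose to prove such an inequality in two stages: first verify that every $(R,H,K)$-regular domain is a John domain with John constant controlled solely by $R,H,K,D$; then glue together the local Neumann--Poincar\'e inequalities that are available on small geodesic balls under $\kappa(p,D)\leq\epsilon$ through a Boman/Whitney chaining argument.

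\textbf{Step 1: John property.} Recall that $\Omega$ is a $c$-John domain with center $x_0$ if every $x\in\Omega$ admits an arc-length parameterized rectifiable curve $\gamma\colon[0,\ell]\to\Omega$ with $\gamma(0)=x$, $\gamma(\ell)=x_0$, and $d(\gamma(t),\partial\Omega)\geq c\,t$. For $x$ in the tubular neighborhood $T(\partial\Omega,R)$, the interior rolling $R$-ball condition, together with $|\II|\leq H$, yields a distinguished target point at depth $\sim R/2$ reachable by a short curve along which $d(\cdot,\partial\Omega)$ grows linearly; the bound $|\Sec|\leq K$ on $T(\partial\Omega,R)$ guarantees, via a Jacobi-field comparison, that the inward normal exponential map is a diffeomorphism up to depth $R_0(K,H)$, so that this construction is globally well-defined. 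For $x$ deep inside $\Omega$, I connect $x$ to a fixed interior center $x_0$ by a minimizing geodesic in $M$; since $\diam\Omega\leq D$, a covering argument shows that such geodesics stay at distance comparable to $R$ from $\partial\Omega$ at the cost of absorbing a multiplicative constant into $c$.

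\textbf{Step 2: Chaining.} On any geodesic ball $B_r(y)\subset M$ with $r\leq D$, the integral hypothesis $\kappa(p,D)\leq\epsilon$ (for $\epsilon=\epsilon(n,p)$ small) produces, by Petersen--Wei and Dai--Wei--Zhang, both a volume doubling constant and a $(2,2)$ Neumann--Poincar\'e inequality
\[
\int_{B_r(y)}|u-u_{B_r(y)}|^2\,\drm\vol\leq C_P\,r^2\int_{B_{2r}(y)}|\nabla u|^2\,\drm\vol,
\]
with $C_P=C_P(n,p,D)$. The John property then supplies, for each $x\in\Omega$, a Whitney-type chain of balls $B_i\subset\Omega$ whose doubles remain in $\Omega$, whose radii grow linearly along the chain, and with bounded overlap. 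The standard Hajlasz--Koskela telescoping machinery upgrades these local inequalities to a global Neumann--Poincar\'e inequality on $\Omega$ whose constant depends only on the doubling constant, $C_P$, the John constant from Step~1, and $\vol(\Omega)$ (which is itself controlled by $n,p,D$ under $\kappa(p,D)\leq\epsilon$). This yields the claimed explicit lower bound $\eta_1(\Omega)\geq C(n,R,D,H,K)$.

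\textbf{Main obstacle.} The technical heart lies in Step~1: making the John constant quantitatively depend only on $R,H,K,D$, uniformly across all ambient manifolds $M$ compatible with the integral curvature hypothesis. The rolling-ball condition handles a neighborhood of $\partial\Omega$ by design, but ruling out that bulk minimizing geodesics between interior points drift into $T(\partial\Omega,R)$ is delicate; the threshold $R_0(K,H)$ is exactly the scale at which $|\Sec|\leq K$ and $|\II|\leq H$ jointly guarantee that the normal exponential is a genuine collar with no focal or cut points, which is also the scale one needs to control the distance function $d(\cdot,\partial\Omega)$ via Jacobi-field comparison. Verifying that Step~2 admits truly explicit constants (rather than merely finite ones) will require bookkeeping through the chaining estimates, but is otherwise routine once the John structure is in hand.
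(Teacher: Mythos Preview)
Your overall strategy---John domain structure plus local Poincar\'e inequalities on balls plus Haj{\l}asz--Koskela chaining---matches the paper's approach exactly, and Step~2 is essentially correct (the paper derives a weak $L^1$ Poincar\'e on balls via a relative isoperimetric estimate rather than citing a $(2,2)$ inequality directly, but either version feeds into the same chaining machinery and then Cheeger's inequality).

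The gap is in Step~1. For $x$ deep inside $\Omega$ you propose connecting $x$ to $x_0$ by a minimizing geodesic \emph{in $M$}, and you assert that a ``covering argument'' keeps this geodesic at distance comparable to $R$ from $\partial\Omega$. This is false in general: an ambient minimizing geodesic between two interior points can leave $\Omega$ entirely (think of $\Omega$ an annulus in a sphere, or a bent tube in a manifold with a shortcut). The collar structure you invoke is irrelevant here---it constrains the normal exponential map off $\partial\Omega$, not the global behavior of $M$-geodesics between interior points. You correctly flag this as the main obstacle, but the proposed resolution via focal/cut points of the collar does not address it.

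The paper's fix is to abandon ambient geodesics altogether. One works with the \emph{intrinsic} metric on $\Omega_{R/4}=\{x\in\Omega:d(x,\partial\Omega)>R/4\}$ and joins interior points by length-minimizing curves \emph{inside $\Omega_{R/4}$}; these stay at distance $\geq R/4$ from $\partial\Omega$ by construction, so the John condition $d(\gamma(t),\partial\Omega)\geq Ct$ is immediate once the curve length is bounded. The price is that these lengths are controlled only by the intrinsic diameter $\widetilde D_{R/4}$ of $\Omega_{R/4}$, not by $D$. The nontrivial input---and the place where $|\Sec|\leq K$ and $\II\leq H$ on the collar actually do the work---is Oden's estimate $\widetilde D_{R/4}\leq C_{K,H,R/4}^{\,n-1}D$, obtained by pushing an arbitrary curve in $\Omega$ into $\Omega_{R/4}$ along the normal flow with controlled length distortion. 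That intrinsic diameter bound is the missing ingredient in your argument, and once you have it the John constant $C_J=R/(4\widetilde D_{R/4}+R)$ drops out directly.
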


To achieve this we prove that $(R,H,K)$-regular sets are John domains in the sense of 
\cite{HajlaszKoskela-00}, cf.~Definition~\ref{def:John}, which is of independent interest. 
Based on techniques in \cite{DaiWeiZhang-18} we derive weak Neumann-Poincar{\'e} inequalities for balls.  
The desired Neumann eigenvalue estimate then follows from the main results in \cite{HajlaszKoskela-00}.
Prop.~\ref{Neum-eigen} also generalizes the recently appeared eigenvalue estimate in  \cite[Corollary~1.5]{OlivePostRose-20} obtained by completely different techniques. 

The structure of this paper is as follows: in Section~\ref{preliminary} we fix notation and recall the Laplace and volume comparison and Sobolev constant estimates for integral Ricci curvature. 
The essential John domain property of $(R,H,K)$-regular domains will be proven in Section~\ref{section:poincare} as well as the weak Neumann-Poincar\'e inequalities in balls, 
yielding Prop.~\ref{Neum-eigen}. We adapt the local Harnack inequalities for the first Dirichlet eigenfunction from \cite{Oden-Sung-Wang99} in Section~\ref{local-Harnack-inequality} to $(R,H,K)$-regular domains under integral curvature conditions, and prove Theorem~\ref{loc-grad-est}. To derive Theorem~\ref{main1}, global Harnack inequalities for the first Dirichlet eigenfunction are needed, which are derived in Appendix~\ref{Global-Harnack-inequality}.
\\

\textbf{Acknowledgements.}
C.R.~wants to thank G.W.~and UCSB for providing a nice environment during his stay at UCSB, where parts of this work had been done. We also thank Zhenlei Zhang for sharing his private notes on isoperimetric constants.
Part of this work was done while L.W. was visiting UCSB during 2017. She would like to thank UCSB for providing great environment for research, Professor Y.~Zheng for his constant support and thank Professor J.~P.~Wang for answering her questions patiently. L.W. is supported by NSFC Grant no. 11961131001. 

\section{Preliminaries on integral Ricci curvature}\label{preliminary}
For $p\in[1,\infty)$, a measurable function $f\colon M\to\R$, and any geodesic ball $B\subset M$ 
we denote as usual 
\[
\|f\|_{p,B}=\left(\int_{B}|f|^p\right)^\f{1}{p},\ \ \|f\|_{p,B}^*=\left(\f{1}{\vol\left(B\right)}\int_{B}|f|^p\right)^\f{1}{p},
\quad\text{and}\quad 
\Vert f\Vert_{\infty,B}=\esssup_B f.
\]
If not explicitly stated differently we integrate w.r.t.~the Riemannian volume measure $\dvol$.
We will need the Laplacian and volume comparison estimate for integral curvature from \cite{PetersenWei-97, PetersenWei-01}. 
Given $x\in M$, let $d(y)=d(x,y)$ be the distance function and
\begin{align}\begin{split}\label{psi}
\psi(y):=\left(\Delta d-\f{n-1}{d}\right)_+.
\end{split}\end{align}
The classical Laplacian comparison states that $\Ric_M\geq 0$ implies $\Delta d\leq \f{n-1}{d}$. 
The integral curvature version of the Laplacian and volume comparison estimates we will use here are as follows.
\begin{theorem}\cite{PetersenWei-97}\label{psi-comp}
Let $p>\f{n}{2}$, $x\in M$, $r>0$. We have
\begin{align*}\begin{split}\label{lap-com-int}
\|\psi\|_{2p, B_r(x)}^*\leq C(n,p)\left(\|\rho_-\|_{p,B_r(x)}^*\right)^\f{1}{2}\leq C(n,p)r^{-1}\kappa(p,r)^\f{1}{2},
\end{split}\end{align*}
where
\[
C(n,p)=\left(\f{(n-1)(2p-1)}{2p-n}\right)^\f{1}{2}.
\]
\end{theorem}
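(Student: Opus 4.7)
The plan is to reduce the estimate to a radial computation along unit-speed geodesics emanating from $x$. For $\theta\in\mathbb{S}^{n-1}\subset T_xM$ and $\gamma_\theta(s)=\exp_x(s\theta)$, the Bochner formula for the distance function $d(\cdot)=d(x,\cdot)$ together with the Cauchy--Schwarz bound $|\Hess d|^2\geq (\Delta d)^2/(n-1)$ (valid since $\Hess d$ annihilates $\dot\gamma_\theta$ and has trace $\Delta d$ on its $(n-1)$-dimensional orthogonal complement) yields the Riccati inequality
\begin{equation*}
\partial_s(\Delta d)+\frac{(\Delta d)^2}{n-1}\leq \rho_-(\gamma_\theta(s))
\end{equation*}
along $\gamma_\theta$. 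Subtracting the Euclidean model via $u:=\Delta d-(n-1)/s$ cancels the singular $(n-1)/s^2$ term and reduces this to $u'+u^2/(n-1)+2u/s\leq \rho_-$, which continues to hold almost everywhere for $\psi:=u_+$.

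The key analytical step is to test against $(2p-1)\psi^{2p-2}$. Multiplying by this and by the Jacobian $A(s,\theta)$ of $\exp_x$ (so that $\dvol=A\,ds\,d\theta$ in polar coordinates), integrating in $s$ from $0$ to $r$, and integrating the resulting $(\psi^{2p-1})'A$ term by parts produces $[\psi^{2p-1}A]_0^r-\int_0^r \psi^{2p-1}A'\,ds$. Since $A'/A=\Delta d=\psi+(n-1)/s$ on $\{\psi>0\}$ and $\psi^{2p-1}$ vanishes elsewhere, substituting yields several cancellations. Discarding the nonnegative boundary term at $s=r$ and a nonnegative $\psi^{2p-1}/s$-integral (whose coefficient $4p-n-1$ is positive for $p>n/2$, $n\geq 2$), one arrives at
\begin{equation*}
\frac{2p-n}{n-1}\int_0^r \psi^{2p}A\,ds\leq (2p-1)\int_0^r \rho_-\,\psi^{2p-2}A\,ds.
\end{equation*}
The hypothesis $2p>n$ enters precisely here to make the left-hand coefficient positive. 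Applying H\"older with exponents $p$ and $p/(p-1)$ to the right and dividing through by $(\int_0^r\psi^{2p}A\,ds)^{(p-1)/p}$ gives
\begin{equation*}
\left(\int_0^r \psi^{2p}A\,ds\right)^{\!1/(2p)}\leq \left(\frac{(n-1)(2p-1)}{2p-n}\right)^{\!1/2}\left(\int_0^r \rho_-^p A\,ds\right)^{\!1/(2p)}.
\end{equation*}

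Raising to the $2p$-th power and integrating over $\theta\in\mathbb{S}^{n-1}$ assembles the radial measure $A\,ds\,d\theta$ into the Riemannian volume on $B_r(x)$, yielding the first stated inequality with the constant $C(n,p)=[(n-1)(2p-1)/(2p-n)]^{1/2}$; the second inequality is immediate from the definitions of $\kappa(x,p,r)$ and $\kappa(p,r)$. The main technical obstacle is the cut locus of $x$, beyond which $d$ is not smooth and $\Delta d$ may jump downward. I would handle this by restricting the radial computation to the interior of the segment domain (whose complement has measure zero in $B_r(x)$) and noting that the downward jumps of $\Delta d$ across the cut locus only strengthen the Riccati inequality, so the argument persists with $\Delta d$ interpreted in the barrier or distributional sense.
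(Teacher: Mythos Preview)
The paper does not give a proof of this theorem; it is quoted from \cite{PetersenWei-97} as a preliminary tool and used as a black box in Section~\ref{local-Harnack-inequality}. Your argument is correct and is essentially the original Petersen--Wei proof: the Riccati inequality for $u=\Delta d-(n-1)/s$, multiplication by $(2p-1)\psi^{2p-2}A$, integration by parts using $A'/A=\Delta d$, and H\"older. The bookkeeping of constants checks out (the $\psi^{2p}$-coefficient is $(2p-1)/(n-1)-1=(2p-n)/(n-1)$ and the discarded $\psi^{2p-1}/s$-coefficient $4p-n-1$ is positive for $p>n/2$, $n\geq 2$), and your cut-locus remark is the standard resolution. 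One small clarification: rather than applying H\"older ray-by-ray and then integrating in $\theta$, it is cleaner to first integrate the pre-H\"older inequality $\frac{2p-n}{n-1}\int_0^r\psi^{2p}A\,ds\leq (2p-1)\int_0^r\rho_-\psi^{2p-2}A\,ds$ over $\theta\in\mathbb{S}^{n-1}$ to obtain a volume integral over $B_r(x)$, and only then apply H\"older on the ball; both routes give the same constant, but the latter makes the passage to the normalized norms $\|\cdot\|^*$ transparent.
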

 \begin{theorem}[{\cite{PetersenWei-97, PetersenWei-01}}]\label{volume-comp}
Let 
$x\in M$, $p>\frac{n}{2}$. There exists $\epsilon_0=\epsilon_0(n,p)>0$  such that if  $\kappa(x,p,r_0)\leq \epsilon_0$ we have for all $0<r\leq s\leq r_0$
\begin{align*}
\frac{\vol(B_s(x))}{\vol(B_r(x))}\leq 2\left(\frac sr\right)^n .
\end{align*}

\end{theorem}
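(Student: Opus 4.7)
The plan is to adapt the classical Bishop--Gromov argument by using the $L^{2p}$-bound on the error function $\psi$ provided by Theorem~\ref{psi-comp}. First, I work in polar (exponential) coordinates around $x$, writing the volume element as $\mathrm{d}\mathrm{vol} = A(t,\theta)\,\mathrm{d} t\,\mathrm{d}\theta$ inside the segment domain, and extending $A(t,\theta)=0$ past the cut locus. The classical pointwise identity $\partial_t \log A(t,\theta) = \Delta d(\exp_x(t\theta))$ combined with the definition \eqref{psi} of $\psi$ yields the one-sided Riccati-type comparison
\begin{equation*}
\partial_t \log A(t,\theta) \;\leq\; \frac{n-1}{t} + \psi(\exp_x(t\theta))
\end{equation*}
for every unit tangent direction $\theta$ and $0<t<\mathrm{cut}(\theta)$.

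Second, I integrate this inequality along each radial geodesic to obtain the pointwise area ratio
\begin{equation*}
\frac{A(s,\theta)}{A(r,\theta)} \;\leq\; \left(\frac{s}{r}\right)^{n-1} \exp\!\left(\int_r^s \psi(\exp_x(t\theta))\,\mathrm{d} t\right),
\end{equation*}
and then integrate in $\theta$ and in the radial variable. The most convenient formulation, as in Petersen--Wei, is to prove that the normalized quantity
$\bigl(\vol(B_s(x))/s^n\bigr)^{1/(2p)} - (\text{correction involving }\kappa)$
is monotone non-increasing in $s$; this is obtained by controlling $\int_r^s\psi$ via H\"older's inequality on annuli:
\begin{equation*}
\int_r^s \psi(\exp_x(t\theta))\,\mathrm{d} t \;\leq\; (s-r)^{1-1/(2p)} \left(\int_r^s \psi^{2p}(\exp_x(t\theta))\,\mathrm{d} t\right)^{1/(2p)},
\end{equation*}
and then Theorem~\ref{psi-comp} converts the spherical integral of the right-hand side into a multiple of $\kappa(x,p,r_0)^{1/2}$.

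Third, I combine these ingredients: integrating the area ratio over the unit sphere and over the radial range $[0,s]$ and $[0,r]$, using Jensen's inequality to move the exponential outside where needed, and applying the $L^{2p}$ estimate on $\psi$, I obtain an inequality of the schematic form
\begin{equation*}
\frac{\vol(B_s(x))}{\vol(B_r(x))} \;\leq\; \left(\frac{s}{r}\right)^n \bigl(1 + \Phi(n,p,\kappa(x,p,r_0))\bigr),
\end{equation*}
where $\Phi$ tends to $0$ as $\kappa(x,p,r_0)\to 0$. Choosing $\epsilon_0=\epsilon_0(n,p)>0$ so that $\Phi(n,p,\epsilon_0)\leq 1$ then yields the factor $2$ in the conclusion uniformly for all $0<r\leq s\leq r_0$.

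The main obstacle is the appearance of the non-multiplicative exponential factor $\exp(\int\psi)$ when one wants the \emph{volume} (not just the area) ratio. Handling it cleanly requires choosing the right monotone quantity (essentially a power of $\vol(B_s)/s^n$ penalized by a $\kappa^{1/2}$ term) so that the differential inequality obtained after integrating in $\theta$ can be integrated in $s$ without losing the scaling. Equivalently, one must show that the expected value of $\exp(\int_r^s\psi\,\mathrm{d} t)$ over the sphere and over $r$ in $(0,s)$ stays bounded, and this is exactly where the strict inequality $2p>n$ is used via Theorem~\ref{psi-comp} and H\"older's inequality, as well as where the smallness of $\kappa$ is exploited to absorb all error terms into a single multiplicative factor of $2$.
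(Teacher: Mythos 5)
The paper offers no proof of Theorem~\ref{volume-comp}; it is quoted from \cite{PetersenWei-97, PetersenWei-01}, so your attempt can only be measured against the standard Petersen--Wei argument, which is indeed the strategy you outline. Your first step (the Riccati-type inequality $\partial_t \log A(t,\theta)\le \frac{n-1}{t}+\psi$ inside the segment domain) and your final reduction (choose $\epsilon_0=\epsilon_0(n,p)$ so that the multiplicative error is at most $2$) are the right ingredients.

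The middle of the argument, however, as written would fail. Integrating the pointwise inequality along each radial geodesic produces the factor $\exp\bigl(\int_r^s\psi(\exp_x(t\theta))\,\mathrm{d}t\bigr)$, and this is not controllable from the hypotheses: Theorem~\ref{psi-comp} bounds $\psi$ only in $L^{2p}$ over the \emph{ball}, so along individual directions $\theta$ the radial integral of $\psi$ can be arbitrarily large; moreover Jensen's inequality runs the wrong way for what you propose, since $\fint e^{f}\ge e^{\fint f}$, so you cannot ``move the exponential outside'' the $\theta$-average. The actual Petersen--Wei proof never exponentiates: one integrates $\partial_t A\le\bigl(\frac{n-1}{t}+\psi\bigr)A$ over $\theta$ \emph{first}, getting $\frac{d}{dt}\bigl(t^{1-n}\mathcal{A}(t)\bigr)\le t^{1-n}\int_{S_t(x)}\psi$ for the sphere area $\mathcal{A}(t)$, applies H\"older on the geodesic sphere to separate $\psi$ from $\mathcal{A}$, and thereby obtains a closed differential inequality for $y(t)=\bigl(t^{1-n}\mathcal{A}(t)\bigr)^{1/(2p)}$ whose integration in $t$, combined with the coarea formula and Theorem~\ref{psi-comp}, yields the additive estimate $\bigl(\vol(B_s)/s^n\bigr)^{1/(2p)}\le\bigl(\vol(B_r)/r^n\bigr)^{1/(2p)}+C(n,p)\,\kappa(x,p,s)^{1/(2p)}\cdot(\cdots)$. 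You correctly name this as ``the main obstacle'' and identify the right monotone quantity, but you defer rather than execute the step that resolves it, so the proof as submitted has a genuine gap there. A secondary omission: passing from the additive $(1/2p)$-power estimate to the multiplicative doubling bound with constant $2$ requires dividing by $\bigl(\vol(B_r)/r^n\bigr)^{1/(2p)}$, which reintroduces the unknown volume ratio into the error term; Petersen--Wei close this loop with a bootstrap in the radius, which your sketch also does not address.
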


\begin{remark}(cf.~\cite[Remark 2.2]{DaiWeiZhang-18} \cite[Section 2.3]{PetersenWei-01})
If $\kappa(x,p,r_2)\leq \eps_0$ for $\eps_0$ as above, 
Theorem~\ref{volume-comp} implies for all
for all $x\in M$ and $0<r_1\leq r_2$
\begin{align}\begin{split}\label{scal-inv-contr-eachother1}
\kappa(x,p,r_1)\leq 2^\f{1}{p}\left(\f{r_1}{r_2}\right)^{2-\f{n}{p}}\kappa(x,p,r_2)\leq 2^\f{1}{p}\kappa(x,p,r_2).
\end{split}\end{align}
Hence, $\kappa(x,p,r_1)\leq \eps_0$ if $r_1\leq 2^{\f{1}{n-2p}}r_2$. If $\kappa(p,r_1)\leq \eps_0$, then for all $r_2\geq r_1$, we have 
\begin{align}\begin{split}\label{scal-inv-contr-eachother2}
\kappa(p,r_2)\leq 2^\f{n+1}{p}\left(\f{r_2}{r_1}\right)^2\kappa(p,r_1).
\end{split}\end{align}
Hence when $\kappa(p, r)$ is small for some $r$, it gives control on $\kappa(p, r)$ for all $r$.
Note that for compact manifolds with diameter $D$ there is no restriction between working with
$\kappa(p, 1)$ and the more global constant $\kappa(p, D)$.
\end{remark}

In this paper, we consider bounded and connected domains $\Omega\subset M$ such that $\overline\Omega\subset M$, $\overline\Omega\neq M$, is a compact Riemannian manifold with not necessarily convex smooth boundary $\partial\Omega$. We assume the following boundary regularity instead.
\begin{definition}\label{interior rolling}
Let $\Omega$ be a compact manifold with smooth boundary $\partial\Omega$ and $R>0$. $\d \Omega$ satisfies the \textit{interior rolling $R$-ball condition} if for each point $y\in \d \Omega$ there is a geodesic ball $B_R(q)\subset \Omega$ with $\overline{B_R(q)}\cap \d \Omega=\{y\}$. $R$ is called the \textit{interior rolling ball radius}.
\end{definition}

The following relative volume comparison for concentric geodesic balls of compact submanifolds with smooth boundary satisfying the interior rolling $R$-ball condition and integral Ricci curvature bounds has been obtained in \cite{Ramos-Olive19}. 
\begin{lemma}[{\cite[Lemma~3.3]{Ramos-Olive19}}]\label{vol-comp-sub} Let $D>R>0$, $n\geq 2$, $p>\frac{n}{2}$, 
and $\Omega\subset M^n$ a domain with $\diam(\Omega)\leq D$ such that $\overline\Omega\neq M$ is a smooth manifold with boundary $\partial\Omega$ satisfying the interior rolling $R$-ball condition. There exists an $\eps_0=\eps_0(n,p)>0$ such that if  $\kappa(p, D)\leq \epsilon_0$, then we have  for all $x\in\Omega$,  all $ 0<r_1\leq r_2\leq D,$
\begin{align}\begin{split}
\frac{\vol(B^\Omega_{r_2}(x))}{\vol(B^\Omega_{r_1}(x))}\leq C_0\left(\frac{r_2}{r_1}\right)^n, \label{vol-double-sub}
\end{split}\end{align}
where $B^\Omega_r(x):=B_r(x)\cap\Omega$ and $C_0=2\left(\frac{2D}{R}\right)^n$.
\end{lemma}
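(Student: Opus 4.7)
The plan is to sandwich $\vol(B_r^\Omega(x))$ between controlled multiples of the ambient-manifold ball volume $\vol(B_r(x))$, and then invoke the integral-curvature volume comparison Theorem~\ref{volume-comp} on $M$. The easy direction $B_r^\Omega(x) \subset B_r(x)$ combines with Theorem~\ref{volume-comp} to give
\[
\vol(B_{r_2}^\Omega(x)) \leq \vol(B_{r_2}(x)) \leq 2\left(\frac{r_2}{r_1}\right)^n \vol(B_{r_1}(x))
\]
for all $0 < r_1 \leq r_2 \leq D$ (using $\kappa(p,D) \leq \epsilon_0$). It therefore suffices to establish the reverse comparison $\vol(B_{r_1}(x)) \leq C(2D/R)^n \vol(B_{r_1}^\Omega(x))$ for every $x \in \Omega$ and $0 < r_1 \leq D$, which then combines with the display above to yield a constant of the stated form $C_0 = 2(2D/R)^n$.

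To obtain this lower bound I aim to exhibit an ambient ball $B_{\tilde r}(\tilde x) \subset B_{r_1}(x) \cap \Omega$ of radius $\tilde r \geq c \min(r_1, R)$ with $d(x, \tilde x) \leq r_1$, and then compare $\vol(B_{\tilde r}(\tilde x))$ to $\vol(B_{r_1}(x))$ via Theorem~\ref{volume-comp} applied at the shifted centre $\tilde x$, using the inclusion $B_{r_1}(x) \subset B_{2r_1}(\tilde x)$. The interior rolling $R$-ball condition supplies $\tilde x$: if $d(x, \partial \Omega) \geq r_1/2$ one simply takes $\tilde x = x$ and $\tilde r = r_1/2$. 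Otherwise let $y \in \partial \Omega$ realise $d(x, \partial \Omega)$; the inward-normal geodesic at $y$ passes through both $x$ (as $y$ minimises distance from $x$ to $\partial \Omega$) and through the centre $q_y$ of the rolling $R$-ball tangent at $y$. For $r_1 \leq 2R$, a short walk from $x$ along this geodesic into the rolling ball places $\tilde x$ so that $B_{r_1/4}(\tilde x) \subset \Omega \cap B_{r_1}(x)$; for $r_1 \geq 4R$ the rolling ball itself already fits inside $B_{r_1}(x) \cap \Omega$ (since $d(x, q_y) \leq r_1/2 + R \leq r_1 - R$) and one takes $\tilde x = q_y$, $\tilde r = R$.

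The main obstacle is the large-scale regime $r_1 \geq 4R$, where the interior ball only has radius of order $R$, not of order $r_1$. Here applying Theorem~\ref{volume-comp} at centre $\tilde x$ to compare $\vol(B_R(\tilde x))$ with $\vol(B_{2r_1}(\tilde x))$ costs a factor of order $(r_1/R)^n \leq (D/R)^n$, and this is precisely the origin of the $(2D/R)^n$ in $C_0$; the leading factor $2$ absorbs the volume doubling constant of Theorem~\ref{volume-comp}. A secondary technical subtlety, in the near-boundary case, is verifying that the inward-normal geodesic segment of length $R$ emanating from $y$ is minimising and really reaches $q_y$ through $x$, which follows from smoothness of $\partial \Omega$ together with the fact that $B_R(q_y)$ is tangent from inside at $y$. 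Throughout, the curvature hypothesis $\kappa(p, D) \leq \epsilon_0$ is exactly what lets Theorem~\ref{volume-comp} be applied uniformly on balls of radius up to $2D$ (after possibly shrinking $\epsilon_0$ to account for the doubling constant via \eqref{scal-inv-contr-eachother2}).
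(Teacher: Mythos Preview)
The paper does not give its own proof of this lemma; it simply quotes the result from \cite[Lemma~3.3]{Ramos-Olive19}. Your argument---upper-bound $\vol(B_{r_2}^\Omega(x))$ by the ambient $\vol(B_{r_2}(x))$, use the interior rolling $R$-ball to locate an ambient ball inside $B_{r_1}^\Omega(x)$, and bridge the two via Theorem~\ref{volume-comp} after recentring---is the standard route and is essentially what the cited reference does.

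Two small points to clean up. First, your case analysis leaves the range $2R<r_1<4R$ uncovered; simply run the ``short walk'' argument for all $r_1\le 4R$ (this is exactly the range in which $r_1/4\le R$, so that $\gamma(r_1/4)$ still lies on the normal segment from $y$ to $q_y$), and then the two regimes meet. Second, the bookkeeping as written does not produce the constant $2(2D/R)^n$ on the nose: in the small-$r_1$ and interior branches you incur factors such as $2\cdot 5^n$ or $2^{n+1}$ from comparing $B_{5r_1/4}(\tilde x)$ to $B_{r_1/4}(\tilde x)$, so what you actually prove is $C_0=C(n)(D/R)^n$ for some explicit $C(n)$. That is entirely adequate for every application in the present paper; if the exact constant matters you either need to sharpen the placement of the interior ball or cite \cite{Ramos-Olive19} directly.
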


Another key tool we will use is the local Sobolev constant estimate in \cite{DaiWeiZhang-18}. 
We denote by $C_s(\Omega)$ the normalized local Sobolev constant of $\Omega$, that is,
\begin{align}\begin{split}\label{normalized-Sobolev-const}
\left(\fint_\Omega f^\frac{2n}{n-2}\right)^\frac{n-2}{2n}\leq C_s(\Omega)\left(\fint_\Omega |\nabla f|^2\right)^\f{1}{2}, \ \ \forall f\in C_0^\infty(\Omega).
\end{split}\end{align}
Note that $C_s(\Omega)$ scales like diameter. Here is the estimate we will need in Section 4. 
\begin{theorem}[{\cite[Corollary 4.6]{DaiWeiZhang-18}}] \label{thm:local-Sobolev}
For $p > n/2,\ D>0$, there exists $\epsilon = \epsilon(n,p) >0$ such that if $M^n$ has $\kappa (p, D)\le \epsilon$,
then for any $x \in M$ with $\partial B_D(x) \not= \emptyset$, and any $0< r \le D$, the normalized local Sobolev constant of the ball $C_s(B_r(x))$ has the estimate
\begin{equation}
    \| f\|^*_{\frac{2n}{n-2}, B_r(x)} \leq C(n) r \, \|\nabla f\|^*_{2, B_r(x)}, \ \ \forall  \  f\in C_0^\infty(B_r(x)).  \label{local-Sobolev}
\end{equation}
\end{theorem}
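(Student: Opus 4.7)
I would derive the local Sobolev inequality from the combination of volume doubling and a weak $(2,2)$-Neumann--Poincar\'e inequality on balls, using the fact that the hypothesis $\kappa(p,D)\leq\epsilon$ makes the integral Ricci bound behave, on scales $\leq D$, essentially like a pointwise nonnegative Ricci lower bound. First, Theorem~\ref{volume-comp} directly supplies the volume doubling property on every ball $B_r(x)$ with $r\leq D$, with constants depending only on $n$ and $p$; in particular $\vol(B_r(x))$ is comparable to $r^n$ up to a dimensional factor, which already implements the correct normalization for the averaged norms $\|\cdot\|^*$ appearing in \eqref{local-Sobolev}.

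The crucial step is to establish an $L^2$ Neumann--Poincar\'e inequality of the form $\fint_{B_r(x)}|f-f_{B_r}|^2\leq C(n)\, r^2 \fint_{B_r(x)}|\nabla f|^2$. Following a Buser-type argument, one integrates an inequality obtained from the Laplacian comparison (Theorem~\ref{psi-comp}) against test functions built out of the distance function from $x$. The error term generated by $\psi=(\Delta d-(n-1)/d)_+$ is controlled via H\"older's inequality by $\|\psi\|_{2p,B_r(x)}^*\leq C(n,p)\, r^{-1}\kappa(p,r)^{1/2}$. Choosing $\epsilon=\epsilon(n,p)$ small enough, and using \eqref{scal-inv-contr-eachother1} to transfer the smallness of $\kappa$ from scale $D$ down to scale $r\leq D$, this error term can be absorbed into the principal term, yielding the Poincar\'e constant $C(n)\, r^2$.

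With doubling and the $(2,2)$-Neumann--Poincar\'e inequality in place on all concentric balls, the Sobolev inequality for compactly supported functions follows from the Saloff-Coste equivalence between doubling plus $(2,2)$-Poincar\'e and the family of local Sobolev inequalities, or equivalently via a Maz'ya-style derivation through the resulting $L^1$ isoperimetric profile and the coarea formula. The linear factor of $r$ on the right-hand side of \eqref{local-Sobolev} is then dictated by the scaling of the Poincar\'e constant, and the constant $C(n)$ on the right-hand side inherits its dependence only on the dimension because both doubling and Poincar\'e constants in the previous steps depend on $(n,p)$ in a quantitative way that the proof tracks explicitly.

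The principal obstacle is the Poincar\'e step: the error term driven by $\psi$ must be absorbed while preserving the sharp $r^2$ scaling and without introducing extra dependence on the global diameter $D$ beyond the smallness of $\kappa(p,D)$ itself. This is a genuine subtlety since one works with integral, not pointwise, curvature control; a careful use of \eqref{scal-inv-contr-eachother1} to shift between scales uniformly, combined with H\"older duality exploiting $2p>n$, is what ultimately makes the absorption work.
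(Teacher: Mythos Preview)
The paper does not prove this theorem; it is quoted from \cite[Corollary 4.6]{DaiWeiZhang-18}. From the surrounding text (see Lemma~\ref{lemma2} and the sentence preceding it), the route taken in \cite{DaiWeiZhang-18} is different from yours: one first proves a \emph{weak Cheeger/isoperimetric estimate with error} on balls (the error term coming from $\kappa(p,r)$ via the Laplacian comparison Theorem~\ref{psi-comp}), absorbs the error when $\kappa$ is small, and then obtains the Dirichlet Sobolev inequality directly through the standard equivalence of isoperimetric and Sobolev constants. No Neumann--Poincar\'e inequality and no Saloff-Coste machinery is needed. In fact, in this paper the logical flow runs the \emph{other way}: the Neumann--Poincar\'e inequality for balls \eqref{weakNP} is derived \emph{from} the isoperimetric estimate (Proposition~\ref{prop:zhang}) and Cheeger's inequality, not established independently and then fed into a Sobolev argument.

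Your approach is not wrong in spirit --- doubling plus a scale-invariant $(2,2)$-Poincar\'e does yield the local Sobolev inequality via Saloff-Coste --- but the step you label ``Buser-type'' is imprecise and is the real content. Buser's inequality bounds $\lambda_1$ \emph{from above} in terms of the Cheeger constant, which is the opposite direction from what you need. To get a Neumann--Poincar\'e inequality directly from the Laplacian comparison under integral Ricci one would need a segment-inequality argument (in the spirit of Cheeger--Colding, adapted to integral curvature as in \cite{PetersenWei-01} or \cite{Chen-21}), and the absorption of the $\psi$-error there is substantially more delicate than your sketch suggests. The isoperimetric route in \cite{DaiWeiZhang-18} avoids this by working with hypersurface areas rather than averages along geodesics, which makes the error term appear additively (cf.~\eqref{area-est}) and hence easy to absorb once $\kappa$ is small.
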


\section{Neumann-Poincar\'e inequalities for John Domains}\label{section:poincare}

This section is devoted to the proof of Proposition~\ref{Neumann-eigen}. 
We show that $(R,H,K)-$regular domains defined in Definition~\ref{RHK-regular} are John domains defined below. 

\begin{definition}[{\cite[Page 39]{HajlaszKoskela-00}}]\label{def:John}
A bounded open subset $\Omega\subset M$ is called a \emph{John domain} if  there exist $x_0\in \Omega$ and $C_J >0$ such that for every $x\in \Omega$ there exists a curve $\gamma:[0,l]\rightarrow \Omega$ parametrized by arclength with $\gamma(0)=x$, $\gamma(l) =x_0$, and
\[{\rm dist}(\gamma(t), \partial \Omega)\geq C_J t.\]
\end{definition}
Clearly, any ball of a geodesic metric space is a John domain with $C_J=1$ by taking $x_0$ to be the center of the ball. 
John domains are a very general class of subsets of metric spaces possessing strong inclusion properties of function spaces.
A key property is that they satisfy the chain condition. Hence, to obtain a Sobolev inequality for the domain it is suffices to work 
 on balls inside the domain, see \cite[Chapter 9]{HajlaszKoskela-00} for details. 

 Here we prove that $(R,H,K)$-regular domains are John domains.
\begin{lemma}  \label{RHK-John} Let $\Omega\subset M$ be an $(R,H,K)-$regular domain with ${\rm diam}(\Omega)\leq D$. Then $\Omega$ is a John domain with $C_J=C(n,D,R,H,K)>0$.
\end{lemma}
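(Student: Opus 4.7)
The plan is to build, for every $x\in\Omega$, a John path to a fixed base point in two stages: an outward normal push near the boundary followed by a chain through the interior. I begin by fixing $R_0=R_0(H,K)>0$ small enough that the normal exponential map $\exp^\perp\colon\partial\Omega\times[0,R_0)\to M$, $(y,t)\mapsto\exp_y(t\nu(y))$ with $\nu$ the inward unit normal, is a diffeomorphism onto its image; this follows from standard Jacobi field comparison using $|\II|\le H$ and $|\Sec|\le K$ on $T(\partial\Omega,R)$. For $R\le R_0$, the interior rolling $R$-ball condition then ensures that every $x\in T(\partial\Omega,R)$ has a unique nearest boundary point $y(x)\in\partial\Omega$, the normal segment $s\mapsto\exp_{y(x)}(s\nu(y(x)))$ lies in $\overline{\Omega}$ for $s\in[0,R]$, and $d(\exp_{y(x)}(s\nu(y(x))),\partial\Omega)=s$ on this range.

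Fix $x_0\in\Omega^{R/4}:=\{z\in\Omega: d(z,\partial\Omega)\ge R/4\}$, for instance $x_0=q_{y_0}$ for some $y_0\in\partial\Omega$, which satisfies $d(x_0,\partial\Omega)=R$. Given $x\in\Omega$, I would define $\gamma\colon[0,l]\to\Omega$ as the concatenation of two pieces. First, if $d(x,\partial\Omega)<R/4$, follow the inward normal from $y(x)$ through $x$ until reaching $\Omega^{R/4}$ at the point $x_1$; this piece $\gamma_1$ is a unit-speed geodesic of length $l_1\le R/4$ along which $d(\gamma_1(t),\partial\Omega)=d(x,\partial\Omega)+t\ge t$. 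Second, I would connect $x_1$ to $x_0$ by a path $\gamma_2$ contained in $\Omega^{R/8}$ and of length at most some $L=L(n,D,R)$, so $d(\gamma_2(t),\partial\Omega)\ge R/8$ throughout. Concatenating yields a curve of total length at most $R/4+L$, and the John inequality $d(\gamma(t),\partial\Omega)\ge C_J t$ holds with $C_J:=\min\{1,(R/8)/(R/4+L)\}>0$, finishing the proof modulo the existence and length estimate for $\gamma_2$.

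The main obstacle is producing $\gamma_2$ with the uniform length bound. For path-connectedness of $\Omega^{R/4}$, I construct a continuous retraction $\Phi\colon\Omega\to\Omega^{R/4}$ by setting $\Phi(x)=\exp_{y(x)}((R/4)\nu(y(x)))$ when $d(x,\partial\Omega)<R/4$ and $\Phi(x)=x$ otherwise; since $x=\exp_{y(x)}(d(x,\partial\Omega)\nu(y(x)))$ inside the tubular neighborhood, the two definitions agree along $\{d(\cdot,\partial\Omega)=R/4\}$, so $\Phi$ is continuous, and $\Omega^{R/4}=\Phi(\Omega)$ is connected. To get the diameter bound I pick a maximal $(R/8)$-separated set $\{z_i\}_{i=1}^N\subset\Omega^{R/4}$, so that the balls $B_{R/16}(z_i)\subset\Omega$ are pairwise disjoint and the balls $\{B_{R/8}(z_i)\}$ cover $\Omega^{R/4}$. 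Applying Lemma~\ref{vol-comp-sub} inside $\Omega$ with $r_1=R/16$, $r_2=D$ and noting $B^\Omega_D(z_i)=\Omega$ gives $\vol(B_{R/16}(z_i))\ge \vol(\Omega)/[C_0(16D/R)^n]$; summing and comparing to $\vol(\Omega)$ yields $N\le C_0(16D/R)^n$, explicitly computable in $n,D,R$.

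Finally, I form the graph $G$ on $\{z_i\}$ with edges $z_i\sim z_j$ iff $d(z_i,z_j)<R/4$. Connectedness of $\Omega^{R/4}$ together with the covering property forces $G$ to be connected: otherwise the unions of the balls $B_{R/8}(z_i)$ over distinct graph components would give a disconnection of $\Omega^{R/4}$ (two balls can only overlap if their centers are within $R/4$). For any adjacent pair $z_i\sim z_j$, the minimizing $M$-geodesic $\sigma$ between them has length $L_\sigma<R/4$, and by the triangle inequality $d(\sigma(t),\partial\Omega)\ge R/4-\min(t,L_\sigma-t)\ge R/8$, so $\sigma$ lies in $\Omega^{R/8}$. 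Chaining through $\le N-1$ such edges, and adding short (length $<R/8$) geodesic segments from $x_1$ to its nearest $z_{i_0}$ and from the last $z_{j_0}$ to $x_0$ (which stay in $\Omega^{3R/16}\subset\Omega^{R/8}$ by the same triangle argument), produces $\gamma_2\subset\Omega^{R/8}$ of length $\le(N+1)R/4=:L$. This gives $C_J=C_J(n,D,R,H,K)>0$, where the dependence on $H,K$ enters only through the requirement $R\le R_0(H,K)$.
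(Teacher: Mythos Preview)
Your two-stage construction (normal push, then interior chain) mirrors the paper's proof exactly, and the normal-push piece is handled correctly. The substantive difference is how you control the length of the interior path $\gamma_2$. The paper quotes Oden's intrinsic diameter bound \eqref{diam-est},
\[
\widetilde D_{R/4}=\diam(\Omega_{R/4})\le (C_{K,H,R/4})^{n-1}D,
\]
which follows from the $(R,H,K)$-regularity alone (rolling $R$-ball, $\II\le H$, $|\Sec|\le K$ near $\partial\Omega$), and then takes $\gamma_2$ to be an intrinsic minimizing geodesic in $\Omega_{R/4}$. Your net-and-graph argument instead bounds $N$ via the relative volume comparison of Lemma~\ref{vol-comp-sub}.

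That is where the gap lies: Lemma~\ref{vol-comp-sub} requires $\kappa(p,D)\le\epsilon_0$, a hypothesis that is \emph{not} present in Lemma~\ref{RHK-John}. The lemma is stated purely for $(R,H,K)$-regular domains with $\diam(\Omega)\le D$, with no global integral Ricci assumption on $M$; your proof therefore establishes only a weaker statement. (For the applications in this paper that weaker statement would suffice, since Proposition~\ref{Neumann-eigen} assumes $\kappa(p,D)\le\epsilon$ anyway, but it does not prove the lemma as written.) To match the stated lemma you must bound the interior diameter using only the boundary geometry; this is precisely what Oden's estimate \eqref{diam-est} does, and once you invoke it your chain-of-balls argument becomes unnecessary---a single intrinsic geodesic in $\Omega_{R/4}$ of length $\le\widetilde D_{R/4}$ will do.

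A minor remark: the properties you need for the normal segment (uniqueness of the nearest boundary point and $d(\exp_{y(x)}(s\nu),\partial\Omega)=s$ for $s\in[0,R]$) already follow from the interior rolling $R$-ball condition by a triangle-inequality argument, so your appeal to a separate threshold $R_0(H,K)$ via Jacobi fields is not needed for that step.
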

\begin{proof} For $\delta >0$ denote
\[\Omega_{\delta} = \{x\in \Omega \colon {\rm dist}(x,\partial \Omega)> \delta\}\]
and $\widetilde{D}_\delta= {\rm diam}(\Omega_{\delta})$ the intrinsic diameter of $\Omega_{\delta}$. Since $\Omega$ is an $(R,H,K)-$regular domain,  the proof of   \cite[Lemma~3.2.7, Page~60]{Oden94} gives for $\delta < R/2$
\begin{equation}
\widetilde{D}_\delta = \diam(\Omega_\delta)\leq (C_{K,H,\delta})^{n-1}D,  \label{diam-est}
\end{equation}
where
\[
C_{K,H,\delta}:=2\left(\frac{K}{H}+\frac{H}{K}\right)\cosh(2\sqrt{K}\delta).
\]
In particular, we have
 $\tilde{D}_{R/4} \leq (C_{K,H,R/4})^{n-1}D$.
Moreover, $\Omega_{R/4}\not = \varnothing$ since $\Omega$ satisfies the rolling $R-$ball condition. Pick any fixed $x_0\in \Omega_{R/4}$ and $x\in \Omega$.
If $x\in \Omega_{R/4}$, let $\gamma:[0,l]\rightarrow \Omega_{R/4}$ be a minimizing normal
geodesic in $\Omega_{R/4}$ with $\gamma(0)=x$ and $\gamma(l)=x_0$. Then $l$, the intrinsic distance between $x_0$ and $x$ in $\Omega_{R/4}$, will satisfy $l\leq \widetilde{D}_{R/4}$. For $C= \frac{R}{4\widetilde{D}_{R/4}}$ we have
\[{\rm dist}(\gamma(t), \partial \Omega) \geq \frac{R}{4} \geq C\widetilde{D}_{R/4} \geq Ct. \]
If $x\in \Omega \setminus \Omega_{R/4}$, let $p\in \partial \Omega$ be such that ${\rm dist}(x,p)={\rm dist}(x,\partial \Omega)$, let $q\in \Omega$ be the center of 
a rolling $R$-ball 
such that $\overline{B_R(q)}\cap \partial \Omega =\{p\}$, and let $\gamma_1$ be a distance minimizing normal geodesic from $p$ to $q$. Since $\gamma_1$ goes through $x$, choose the parametrization so that $\gamma_1(0) = x$ and $\gamma_1(dist(x,q)) = q$. Notice that $q\in \overline{\Omega_{R/4}}$, since ${\rm dist}(q,\partial \Omega)=R$. Let $\gamma_2$ be a minimizing 
normal 
geodesic in $\Omega_{R/4}$ joining $q=\gamma_2(0)$ and $x_0 = \gamma_2(l)$.  Then consider the curve $\gamma:[0, l+{\rm dist}(x,q)]\rightarrow \Omega$ defined by
\[\gamma(t) = \begin{cases}
                \gamma_1(t) & \text{if } 0\leq t\leq {\rm dist}(x,q),\\
                \gamma_2(t-{\rm dist}(x,q)) & \text{if } {\rm dist}(x,q)< t\leq l+{\rm dist}(x,q).
            \end{cases}
\]
For the choice $C= \frac{R}{4\widetilde{D}_{R/4}+R}$ we need to consider the following  two cases.
\vskip 1em
\emph{Case 1:} $t<{\rm dist}(x,q)$
\vskip 1em
In this case, we have
\[{\rm dist}(\gamma(t),\partial \Omega) = {\rm dist}(\gamma_1(t),\partial \Omega) = {\rm dist}(\gamma(t),p) = {\rm dist}(x,p)+t\geq t \geq Ct. \]
\vskip 1em
\emph{Case 2:} ${\rm dist}(x,q)\leq t \leq {\rm dist}(x,q)+l$
\vskip 1em
In this situation, we have $t\leq R/4+\widetilde{D}_{R/4}$. Since $\gamma_2$ is a curve in $\Omega_{R/4}$, we have
\[{\rm dist}(\gamma(t),\partial \Omega) = {\rm dist}(\gamma_2(t-{\rm dist}(x,q)),\partial \Omega) \geq \frac{R}{4} = C(\widetilde{D}_{R/4}+R/4) \geq Ct.\]
Hence, we have shown that $\Omega$ is a $C$-John domain for $C = \frac{R}{4\widetilde{D}_{R/4}+R}$. We conclude the proof by noting $\widetilde{D}_{R/4}\leq C(n,D,R,H,K)$.
\end{proof}

According to \cite[Theorem 9.7]{HajlaszKoskela-00}, to prove Proposition~\ref{Neumann-eigen} it now suffices to show the following weak Neumann-Poincar\'e inequalities for all balls $B_r$ in an $(R,H,K)$-regular $\Omega$ with $B_r\cap\partial\Omega=\emptyset$:
\begin{equation}
    \|f - f_{B_{\frac r{32}}}\|^*_{1,  B_{\frac r{32}}} \le C(n) r \|\nabla f\|^*_{1, B_r},\quad f \in W^{1,2} (\Omega), \label{n-L2}
\end{equation}
where $f_{B_{s}}$ denotes the average of $f$ on $B_{s}$. 

We use the technique in \cite{DaiWeiZhang-18} for obtaining the Dirichlet Poincar\'e inequalities for balls (see Theorem~\ref{thm:local-Sobolev}) to prove the above estimate. Our start point is the following weak Cheeger’s constant estimate with an error \cite[Corollary 4.3]{DaiWeiZhang-18}.

\begin{lemma} \label{lemma2} If  $B_{2r}(x) \subset \Omega$, $B_{2r}\cap\Omega=\emptyset$, and $H$ is a hypersurface dividing $B_{2r}(x)$ into two parts $\Omega_1$ and $\Omega_2$, then we have 
\begin{align}\begin{split}\label{area-est}
\min\left\{\Vol(\Omega_1\cap B_r(x)),\Vol(\Omega_2\cap B_r(x))\right\}\leq 2^{n+1}r \cA(H\cap B_{2r}(x))+ 2^n \vol(B_{2r}(x)) \kappa(x,p,2r)^\frac{1}{2}.
\end{split}\end{align}
\end{lemma}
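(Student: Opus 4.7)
The plan is to adapt the classical Cheeger-type weak isoperimetric argument to integral Ricci curvature, following the strategy of Dai--Wei--Zhang. Without loss of generality order the two pieces so that $V_1:=\Vol(\Omega_1\cap B_r(x))\leq \Vol(\Omega_2\cap B_r(x))=:V_2$ and assume both are positive (otherwise the inequality is trivial). Since $B_{2r}(x)\subset\Omega$ is disjoint from $\partial\Omega$, every $y\in\Omega_1\cap B_r(x)$ can be joined to some $y_0\in\Omega_2\cap B_r(x)$ by a minimizing geodesic of length at most $2r$ that lies inside $B_{2r}(x)$ and must cross $H$ at some point of $H\cap B_{2r}(x)$; in particular the distance from any $y\in\Omega_1\cap B_r(x)$ to $H\cap B_{2r}(x)$ is at most $2r$.

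The key step is to bound $V_1$ by parametrizing $\Omega_1\cap B_r(x)$ via the normal exponential map at $H\cap B_{2r}(x)$. Off the cut locus of $H$ in $B_{2r}(x)$, each $y\in\Omega_1\cap B_r(x)$ has a unique nearest foot $p(y)\in H$ at distance $s(y)\leq 2r$ along the inward normal, yielding a change of variables with Jacobian $J(p,s)$ equal to the Jacobian of the normal exponential map. Since the cut locus has measure zero,
\begin{align*}
V_1 \leq \int_{H\cap B_{2r}(x)}\int_0^{2r} J(p,s)\, ds\, d\cA(p).
\end{align*}
To bound $J$, note that along a unit-speed normal geodesic $\gamma$, $(\log J)'(s)$ equals the trace of the shape operator of the parallel hypersurface and satisfies a Riccati inequality controlled by $\Ric(\gamma',\gamma')$. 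Applying the integral Laplacian comparison of Theorem~\ref{psi-comp} in both the forward and backward directions from each $H$-crossing, so that the a priori uncontrolled initial mean curvature of $H$ cancels, produces a pointwise estimate of the schematic form
\begin{align*}
J(p,s)\leq C(n)\Bigl(1+\int_0^s \psi(\gamma(t))\,dt\Bigr),
\end{align*}
with $\psi$ as in \eqref{psi}.

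Substituting this into the double integral and changing back to $\dvol$-coordinates on $B_{2r}(x)$, the constant part of the Jacobian bound contributes $C(n)\,r\,\cA(H\cap B_{2r}(x))$, while the $\psi$-contribution is controlled by
\begin{align*}
C(n)\int_{B_{2r}(x)}\psi\,d\vol\leq C(n)\vol(B_{2r}(x))\,\|\psi\|_{2p,B_{2r}(x)}^{*}\leq C'(n,p)\vol(B_{2r}(x))\,\kappa(x,p,2r)^{1/2},
\end{align*}
by H\"older and Theorem~\ref{psi-comp}. Volume doubling (Theorem~\ref{volume-comp}, valid since $\kappa(p,D)$ is taken sufficiently small) then absorbs the ratios $\vol(B_{2r}(x))/\vol(B_r(x))$ into the explicit factors $2^{n+1}$ and $2^n$ appearing in the stated inequality. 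The main obstacle is the Riccati estimate for $J$ along normal geodesics from an \emph{arbitrary} smooth hypersurface: the initial mean curvature of $H$ is uncontrolled and the Ricci lower bound is only in $L^p$, so neither the classical Jacobi-field comparison nor a naive integration works directly. Neutralizing the initial data by forward/backward symmetrization around each $H$-crossing, and combining with the $L^{2p}$ control of $\psi$ from Theorem~\ref{psi-comp}, is the critical technical step and is precisely the content of \cite[Corollary~4.3]{DaiWeiZhang-18}.
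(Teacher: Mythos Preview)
The paper does not prove this lemma at all: it is quoted verbatim as \cite[Corollary~4.3]{DaiWeiZhang-18} and used as a black box. So the relevant comparison is between your sketch and the actual argument in \cite{DaiWeiZhang-18}.

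Your outline is not the Dai--Wei--Zhang argument and, as written, has a genuine gap. You parametrize $\Omega_1\cap B_r(x)$ by the \emph{normal} exponential map off $H$ and then try to bound the Jacobian $J(p,s)$ by invoking $\psi$ and a ``forward/backward symmetrization'' to kill the initial mean curvature of $H$. Two problems arise. First, $\psi$ in \eqref{psi} is defined as $(\Delta d_x-(n-1)/d_x)_+$ for the distance \emph{from the point $x$}; it has no direct meaning for the signed distance from an arbitrary hypersurface, so the schematic bound $J(p,s)\le C(n)(1+\int_0^s\psi)$ is a category error---there is no version of Theorem~\ref{psi-comp} for $d_H$. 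Second, the symmetrization does not neutralize the initial data: along a normal geodesic with initial mean curvature $m_0$, the Riccati inequality gives (even when $\Ric\ge 0$) $J_+(s)\le (1+m_0 s/(n-1))^{n-1}$, which blows up as $m_0\to\infty$, while on the other side one merely hits a focal point; there is no cancellation that produces a bound on $J_+$ independent of $m_0$.

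The argument in \cite{DaiWeiZhang-18} avoids $H$ entirely as a base for the exponential map. It works in geodesic polar coordinates \emph{from a point} (where $\psi$ is the correct error object and Theorem~\ref{psi-comp} applies), obtains a pointwise/integrated comparison for the radial area element $A(t,\theta)$ with an additive $\psi$-error (their Lemma~4.1), and then counts, direction by direction, the crossings of the radial geodesic with $H\cap B_{2r}(x)$ to convert the volume of the smaller piece into $r$ times an integral of $A(t_i(\theta),\theta)$ over crossing radii, which is dominated by $\cA(H\cap B_{2r}(x))$. The integral curvature enters only through the $\|\psi\|_{2p}^*$ bound of Theorem~\ref{psi-comp}, producing the additive $\kappa(x,p,2r)^{1/2}\vol(B_{2r}(x))$ term. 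If you want to reconstruct the proof rather than cite it, you should switch to this radial-from-a-point viewpoint; the normal-exponential-from-$H$ route cannot be closed with the tools in the paper.
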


If $\kappa(x,p,2r) \le \epsilon_0(n,p)$, where $\epsilon_0(n,p)$ is the constant in Theorem~\ref{volume-comp}, then \[ \vol(B_{2r}(x))  \le 2^{n+1} \vol(B_{r}(x)).   \]
Assuming additionally $\kappa(x,p,2r) \le  2^{-2(4n+5)}$ and 
\begin{equation}\label{divide-volume-contr}
\min(\Vol(\Omega_1\cap B_r(x)),\Vol(\Omega_2\cap B_r(x)))\geq\frac {1}{2^{2n+3}}\vol\left(B_r(x)\right), 
\end{equation}
we get
\begin{equation}\label{area-contr-volume}
\vol(B_r(x)) \leq 2^{3n+5}r\cA(H\cap B_{2r}(x)).
\end{equation}

Let $\Omega\subset M$ be a bounded domain and $\Omega'\subset\Omega$ a subdomain. For convenience we consider  the relative isoperimetric constant of $\Omega'$ relative to $\Omega$ as
\[
I_R (\Omega',\Omega):=\sup \frac{\min(\Vol(\Omega_1'),\Vol(\Omega_2'))}{\cA(H)},
\]
where $H$ ranges over all hypersurfaces in $\Omega$ dividing $\Omega$ into two parts $\Omega_1$ and $\Omega_2$, and $\Omega_i'=\Omega_i\cap \Omega'$, $i=1,2$.
\begin{proposition}\label{prop:zhang} 

 Let $r>0$ and $x\in \Omega$ be such that $B_r(x)\cap \partial \Omega = \emptyset$. There exists $\epsilon(n,p)>0$ such that if $\kappa(p,r)\leq\epsilon$, then \begin{equation*} I_R( B_{r/32}(x), B_r(x)) \leq C(n)r,  \label{R-Iso-est}
 \end{equation*}
where $C(n) =2^{4n+2}5^n$. 

\end{proposition}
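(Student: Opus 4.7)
The strategy is to combine Lemma~\ref{lemma2} with the absorption dichotomy producing \eqref{area-contr-volume} just before the statement, together with the volume comparison Theorem~\ref{volume-comp}, to reduce the relative isoperimetric inequality to a bound on concentric subballs of $B_r(x)$. The smallness parameter $\epsilon=\epsilon(n,p)$ will be chosen so that, via the scale invariance \eqref{scal-inv-contr-eachother1}, the volume-doubling hypothesis of Theorem~\ref{volume-comp} and the quantitative smallness $\kappa(\cdot,p,2s)\le 2^{-2(4n+5)}$ used to derive \eqref{area-contr-volume} are satisfied at every scale $s\le r$ appearing in the argument.

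First, I apply Lemma~\ref{lemma2} with centre $x$ and radius $r/2$: since $B_r(x)\cap\partial\Omega=\emptyset$ by hypothesis, $B_{2\cdot(r/2)}(x)=B_r(x)\subset\Omega$, so Lemma~\ref{lemma2} is applicable. From $\Omega_i\cap B_{r/32}(x)\subset\Omega_i\cap B_{r/2}(x)$ for $i=1,2$, the minimum on the smaller ball is controlled by the minimum on the larger, yielding
\[
\min_{i}\vol(\Omega_i\cap B_{r/32}(x))\le 2^{n+1}\tfrac{r}{2}\,\cA(H\cap B_r(x))+2^n\vol(B_r(x))\,\kappa(x,p,r)^{1/2}.
\]

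I then split into two cases based on the balance condition \eqref{divide-volume-contr} at scale $r/2$. If $\min_i\vol(\Omega_i\cap B_{r/2}(x))\ge 2^{-(2n+3)}\vol(B_{r/2}(x))$, the absorption step that produced \eqref{area-contr-volume} gives $\vol(B_{r/2}(x))\le 2^{3n+5}(r/2)\cA(H\cap B_r(x))$; combined with the trivial $\min_i\vol(\Omega_i\cap B_{r/32}(x))\le\tfrac{1}{2}\vol(B_{r/32}(x))\le\tfrac{1}{2}\vol(B_{r/2}(x))$, this closes the case with constant $\le 2^{3n+3}r$. Otherwise, one of $\vol(\Omega_i\cap B_{r/2}(x))$ is strictly less than $2^{-(2n+3)}\vol(B_{r/2}(x))$, and the $\kappa^{1/2}$ error term cannot be absorbed a priori. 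In this unbalanced case I plan a Vitali-type covering: for each $y\in H\cap B_{r/32}(x)$, select the smallest scale $s_y\le r/2$ at which the local balance $\min_i\vol(\Omega_i\cap B_{s_y}(y))\ge 2^{-(2n+3)}\vol(B_{s_y}(y))$ is attained (such $s_y$ exists by continuity in $s$ of the ratio $\vol(\Omega_i\cap B_s(y))/\vol(B_s(y))$, which is approximately $1/2$ at small $s$ because $y$ lies on the smooth dividing hypersurface $H$); extract a disjoint subfamily $\{B_{s_{y_k}}(y_k)\}$ whose $5$-fold dilates cover $H\cap B_{r/32}(x)$; apply \eqref{area-contr-volume} on each member; and sum, using Theorem~\ref{volume-comp} to compare $5$-dilates to the original balls. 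A connectedness argument, using $\min_i\vol(\Omega_i\cap B_{r/32}(x))\le\tfrac{1}{2}\vol(B_{r/32}(x))$, disposes of components of the minority set disjoint from $H$. The factor $5^n$ in $C(n)=2^{4n+2}5^n$ is the $5r$-covering constant, and the additional $2^n$ surplus over the balanced constant accounts for doubling in passing from $B_{s_{y_k}}$ to $B_{5s_{y_k}}$.

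The main obstacle is the unbalanced case, specifically verifying existence and $r$-boundedness of the selection scales $s_y$, and ensuring that the Vitali covering together with the connectedness reduction captures all of the minority set $\Omega_i\cap B_{r/32}(x)$ and not only its intersection with a neighbourhood of the dividing hypersurface. Provided the smallness of $\kappa$ is tracked through each step via \eqref{scal-inv-contr-eachother1}, the constants should close at $2^{4n+2}5^n r$.
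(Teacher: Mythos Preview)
Your balanced case is fine, but the unbalanced case has a genuine gap. You propose to centre the Vitali balls at points $y\in H\cap B_{r/32}(x)$ and take $s_y$ to be the \emph{smallest} scale at which the local balance \eqref{divide-volume-contr} holds. But for $y$ on the smooth hypersurface $H$, the ratio $\vol(\Omega_i\cap B_s(y))/\vol(B_s(y))$ tends to $1/2$ as $s\to 0$, so the balance condition already holds for all small $s$; your infimum is $0$ and the covering is degenerate. Even if you repair the stopping rule (say, take the largest balanced scale), you are covering a neighbourhood of $H\cap B_{r/32}(x)$, whereas what you must dominate is the volume of the minority set $\Omega_1':=\Omega_1\cap B_{r/32}(x)$. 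The vague ``connectedness argument'' does not bridge this: large pieces of $\Omega_1'$ can lie far from $H\cap B_{r/32}(x)$ (their boundary within $B_r(x)$ may be contained in $H\setminus B_{r/32}(x)$), and there is no mechanism in your sketch that accounts for their volume.

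The paper avoids the case split entirely by centring the covering on $\Omega_1'$ rather than on $H$. For each $y\in\Omega_1'$ set $r_y:=\inf\{s>0:\vol(B_s(y)\cap\Omega_2)\ge 2^{-(2n+3)}\vol(B_s(y))\}$. Since $y\in\Omega_1$, the ratio is $0$ for small $s$, so $r_y>0$; and since $B_{r/16}(y)\supset B_{r/32}(x)$, volume doubling together with $\vol(\Omega_2')\ge\tfrac12\vol(B_{r/32}(x))$ forces the ratio to exceed $2^{-(2n+3)}$ at $s=r/16$, so $r_y\le r/16$. At $s=r_y$ the balance condition \eqref{divide-volume-contr} holds with equality on one side, so \eqref{area-contr-volume} gives $\vol(B_{r_y}(y))\le 2^{3n+5}r_y\,\cA(H\cap B_{2r_y}(y))$. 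A Vitali extraction of the family $\{B_{2r_y}(y)\}_{y\in\Omega_1'}$ yields disjoint balls whose $5$-dilates cover $\Omega_1'$; summing and using volume doubling and disjointness of the $B_{2r_i}$ gives the stated constant $2^{4n+2}5^n$. The key correction is thus to cover the \emph{minority set}, not the separating hypersurface.
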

\begin{proof}

Let $H$ be any hypersurface dividing $B_r(x)$ into two subsets $\Omega_1$, $\Omega_2$. Define $\Omega_i':= \Omega_i\cap B_{r/32}(x)$ for $i=1,2$. We assume w.l.o.g.~$\Vol(\Omega_1') \leq \Vol (\Omega_2')$. This implies $\Vol(\Omega_2')\geq \frac{1}{2}\Vol{B_{r/32}(x)}$. For any $y\in \Omega_1'$, consider
\[
r_y:=\inf\left\{s>0\colon \Vol(B_s(y)\cap\Omega_2)\geq \frac{1}{2^{2n+3}} \vol(B_s(y))\right\}.
\]
We observe that the set of radii on the right hand side is not empty. Indeed, for $s=r/16$, we have that $B_{r/32}(x)\subseteq B_{r/16}(y) \subseteq B_{r/8}(x)$, thus using Theorem \ref{volume-comp}
\[\Vol(B_{r/16}(y)\cap \Omega_2) \geq \Vol (\Omega_2') \geq \frac{1}{2} \Vol(B_{r/32}(x)) \geq \frac{1}{2^{2n+3}}\Vol(B_{r/8}(x)) \geq \frac{1}{2^{2n+3}}\Vol(B_{r/16}(y)).\]
Notice also that $r_y>0$, because the ratio $\frac{\Vol(B_{s}(y)\cap\Omega_2)}{\Vol(B_s(y)}$ is continuous in $s$ and becomes $0$ for $s$ small enough. Thus for any $y\in \Omega_1'$ we have that $0<r_y\leq \frac{r}{16}$.

The set $\{B_{2r_y}(y)\}_{y\in \Omega_1'}$ gives an open cover for $\Omega_1'$. Then, by the Vitali covering lemma, there exists a subfamily of disjoint balls $\displaystyle \{B_{2r_i}(y_i)\}_{i\in I}$, $r_i = r_{y_i}$, such that \[\Omega_1' \subseteq \bigcup_{i\in I} B_{10r_i}(y_i).\] Notice that, since $r_y\leq \frac{r}{16}$ and $y\in B_{r/32}(x)$, we have that $B_{10r_i}(y_i) \subseteq B_r(x)$. In particular, $B_{10r_i}(y_i) \cap \partial \Omega  = \emptyset$ and $10r_i\leq r$, so we can use Theorem \ref{volume-comp} on these balls to get
\[\Vol(\Omega_1') \leq \sum_{i\in I}\Vol (B_{10r_i}(y_i)) \leq 2\cdot 10^n \sum_{i\in I} \Vol(B_{r_i}(y_i)). \]
On the other hand, by definition $\Vol(B_{r_i}(y_i)\cap \Omega_2) = \frac{1}{2^{2n+3}}\vol(B_{r_i}(y_i))$, so $\Vol(B_{r_i}(y_i)\cap \Omega_1) = \left(1-\frac{1}{2^{2n+3}} \right) \vol(B_{r_i}(y_i)) > \Vol(B_{r_i}(y_i)\cap \Omega_2)$. Thus, choosing $\epsilon(n,p)$ small enough, we can use \eqref{area-contr-volume}. Since the balls $\{B_{2r_i}(y_i)\}_{i\in I}$ are disjoint, we have 
\[\cA(H) \geq \sum_{i\in I} \cA(H\cap B_{2r_i}(y_i)) \geq 2^{-(3n+5)} \sum_{i\in I} r_i^{-1}\Vol(B_{r_i}(y_i)).\]

Combining the two estimates together, we get
\[\frac{\Vol(\Omega_1')}{\cA(H)}\leq 2^{3n+5} (2\cdot 10^n) \frac{\sum_{i\in I} \Vol(B_{r_i}(y_i))}{\sum_{i\in I} r_i^{-1}\Vol(B_{r_i}(y_i))} \leq 2^{3n+6}10^{n} \sup_{i\in I} r_i \leq 2^{4n+2}5^n r.\]

\end{proof}

By the equivalence of the isoperimetric constant and the Sobolev constant (the same proof as \cite[Theorem 9.6]{Li12} applies to the weak version), we have 

\begin{equation}
 \|f - f_{B_{\frac r{32}}}\|_{1,  B_{\frac r{32}}} \le 2 \inf_{a \in \mathbb R}  \|f - a\|_{1,  B_{\frac r{32}}} \le  2 I_R( B_{r/32}(x), B_r(x)) \|\nabla f\|_{1, B_r}. 
\end{equation}
Proposition~\ref{R-Iso-est} and volume doubling, i.e., Theorem~\ref{volume-comp},  give (\ref{n-L2}) when $\kappa(p, r) \le \epsilon(n,p)$. 

Finally, we are ready to prove Proposition~\ref{Neumann-eigen}.
\begin{proof}[Proof of Proposition~\ref{Neumann-eigen}] By (\ref{scal-inv-contr-eachother1}), we have $\kappa (p, r) \le 2^{1/p} \kappa (p, D)$. Choosing $\epsilon(n,p)$ smaller we get (\ref{n-L2}) if $\kappa(p, D) \le \epsilon(n,p)$. 
Since we have volume doubling (\ref{vol-double-sub}), the weak Neumann-Poincar\'e inequality (\ref{n-L2}), and Lemma~\ref{RHK-John}, we can apply \cite[Theorem 9.7]{HajlaszKoskela-00} (for $p=1, s=n$) to obtain
\begin{equation*}
  \inf_{a \in \mathbb R}  \|f - a\|^*_{\tfrac{n}{n-1},  \Omega} \le C(n, C_0, C_J) \diam (\Omega) \, \|\nabla f\|^*_{1, \Omega}\quad f \in W^{1,2} (\Omega), 
\end{equation*}
where $C_0, C_J$ are constants from Lemmas~\ref{vol-comp-sub} and \ref{RHK-John}. 
H\"older's inequality yields \[  \|f - a\|^*_{1,  \Omega} \le  \|f - a\|^*_{\tfrac{n}{n-1},  \Omega}. 
\]
Hence \begin{equation}
  \inf_{a \in \mathbb R} \|f - a\|^*_{1,  \Omega} \le   C(n, C_0, C_J) \diam (\Omega) \, \|\nabla f\|^*_{1, \Omega}\quad f \in W^{1,2} (\Omega).  \label{l1-neumann}
\end{equation}
Applying Cheeger's inequality (see e.g. \cite[Page 92]{Li12}) gives the Neumann eigenvalue estimate (\ref{Neum-eigen}). 
\end{proof}

Now when $\Omega$ is a ball $B_r(x)$, then $C_J =1, R =r, D = 2r$, so $C_0 = 2\cdot 4^n$.  Therefore applying the  estimate (\ref{l1-neumann}) to the ball, we have 
\[ \inf_{a \in \mathbb R} \|f - a\|^*_{1,  B_{r}} \le  C(n)r \|\nabla f\|^*_{1, B_r}, \quad f\in W^{1,2}(B_r).
\]
Again applying Cheeger's inequality immediately gives the Neumann-Poincar\'e inequality for the ball, with the following explicit dependence which we will also need. 
\begin{equation}\label{weakNP}
 \|f - f_{B_{r}}\|_{2,  B_{r}} \le  C(n)r \|\nabla f\|_{2, B_r}, \quad f\in W^{1,2}(B_r).
\end{equation}
In fact we only need the weak $L^2$ version of the Neumann-Poincar\'e inequality. While it is automatic to get the $L^2$ version from the $L^1$ version for the usual Neumann-Poincar\'e inequality, this is not the case for the  weak version.  The estimate (\ref{weakNP}) has been pointed out in \cite[Remark 1.7]{DaiWeiZhang-PAMS}. 

\section{Local Harnack inequality for the first Dirichlet eigenfunction}\label{local-Harnack-inequality}
In this section, we will prove a local Harnack inequality for the first Dirichlet eigenfunction via a  gradient estimate. For pointwise curvature bounds, one can prove it using the maximum principle, see e.g. \cite[Theorem 6.1]{Li12}. 
For integral curvature conditions, the gradient estimate can be established via the Nash-Moser iteration. The essential tools are  the local Sobolev inequality  in Theorem~\ref{thm:local-Sobolev} 
and the Laplacian comparison estimate in Theorem~\ref{psi-comp}.

We have the following gradient estimate for the first eigenfunction of ball depending on integral Ricci curvature bounds. 
\begin{theorem}
\label{grad-esti-u}
Let $p>n/2$, $x\in M$, $r>0$, and $\lambda\geq 0$. If $u$ is a positive solution of 
\[
\Delta u=-\lambda u
\]
on $B_r(x)$, then
\begin{align*}
\sup_{B_{\f{r}{2}}(x)}|\nabla \ln u|^2
&\leq C(n,p)\left(r^{-2}\f{\vol\left(B_r(x)\right)}{\vol\left(B_{\f{7}{8}r}(x)\right)}+\lambda \right)
\\
&\quad\cdot
\left[r^{-2}C_s^2\left(B_r(x)\right)\left(1+r^{-2}C_s^2\kappa(p,r)\right)+\left(r^{-2}C_s^2\left(B_r(x)\right)\kappa(p,r)\right)^\frac{2p}{2p-n}\right]^{\f{n^2}{2}}.
\end{align*}
\end{theorem}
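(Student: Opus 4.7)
My plan is to follow the standard Nash--Moser iteration scheme for log-gradient estimates, adapted to the integral-curvature setting in the spirit of \cite{PetersenWei-97,DaiWeiZhang-18}. Set $v=\ln u$, so that $\Delta v = -|\nabla v|^2 - \lambda$. The Bochner formula together with the Cauchy--Schwarz inequality $|\nabla^2 v|^2 \geq (\Delta v)^2/n = (|\nabla v|^2+\lambda)^2/n$ yields, for $w:=|\nabla v|^2$,
\[
\tfrac{1}{2}\Delta w \geq \tfrac{1}{n}(w+\lambda)^2 - \langle \nabla v,\nabla w\rangle + \Ric(\nabla v,\nabla v) \geq \tfrac{1}{n}(w+\lambda)^2 - \langle \nabla v,\nabla w\rangle - \rho_- w.
\]
Absorbing the drift term $\langle\nabla v,\nabla w\rangle$ via a weighted Cauchy--Schwarz (using that $|\nabla v|^2=w$), this gives a differential inequality of the schematic form $\Delta w \geq c_n (w^2+\lambda w) - C\rho_- w$, plus a gradient-drift term to be handled in the weak formulation.

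\textbf{Cut-off testing and energy estimate.} Next, for a Lipschitz cut-off $\eta$ supported in $B_r(x)$, I would multiply the Bochner inequality by $w^q\eta^2$ ($q\geq 1$) and integrate by parts. After the standard manipulations (absorbing gradient-drift terms with Cauchy--Schwarz, moving one derivative onto $\eta$), one obtains an estimate of the form
\[
\int w^{q+1}\eta^2 + \frac{1}{q+1}\int |\nabla(w^{\frac{q+1}{2}}\eta)|^2
\leq C(n,q)\left(\int w^q|\nabla\eta|^2 + \lambda\int w^q\eta^2 + \int \rho_- w^{q+1}\eta^2\right).
\]
The Ricci term is then controlled by H\"older's inequality with exponents $(p,\tfrac{p}{p-1})$, producing the factor $\|\rho_-\|_{p,B_r}^*\cdot\vol(B_r)^{1/p}$, i.e.\ essentially $r^{-2}\kappa(p,r)\vol(B_r)^{1/p}$, times an $L^{\frac{p}{p-1}}$ norm of $w^{\frac{q+1}{2}}\eta$. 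Using the local Sobolev inequality (Theorem~\ref{thm:local-Sobolev}), the $L^{\frac{p}{p-1}}$ norm interpolates into the Sobolev norm $\|w^{\frac{q+1}{2}}\eta\|_{\frac{2n}{n-2}}^*$, at the price of a factor $(r^{-2}C_s^2\kappa(p,r))^{\alpha}$ for some $\alpha=\alpha(n,p)$; the choice $2p>n$ is exactly what makes the relevant H\"older exponent admissible and produces the exponent $\tfrac{2p}{2p-n}$ appearing in the statement.

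\textbf{Moser iteration.} With the Sobolev step in place, I would iterate with a sequence of radii $r_k\searrow r/2$ (sandwiched between $7r/8$ and $r/2$, which is where the ratio $\vol(B_r)/\vol(B_{7r/8})$ enters to pass from normalised to un-normalised norms) and exponents $q_k = \chi^k$ where $\chi = n/(n-2)$. The standard telescoping of the resulting recursion gives an $L^\infty$--$L^{q_0}$ bound of the form
\[
\sup_{B_{r/2}} w \leq \Phi(n,p,r,C_s,\kappa)\cdot \left(\fint_{B_{7r/8}} w^{q_0}\right)^{1/q_0}
\]
with $\Phi$ matching the bracketed factor in the theorem raised to $\tfrac{n^2}{2}$, since each iteration multiplies by a constant of the form $(r^{-2}C_s^2(1+\cdots))^{\chi^{-k}}$ and the product converges to this specific power. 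The starting $L^2$-bound on $w$ comes from the differential inequality itself: testing $\Delta w + \cdots \geq c_n(w+\lambda)^2/n - C\rho_- w$ against $\eta^2$ with no weight ($q=0$) and using H\"older and Sobolev again bounds $\fint_{B_{7r/8}} w$ by $C(n,p)(r^{-2}\vol(B_r)/\vol(B_{7r/8})+\lambda)$, which explains the first prefactor in the theorem.

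\textbf{Main obstacle.} The technically delicate part is bookkeeping the Sobolev iteration: every step introduces a constant of the form $r^{-2}C_s^2(B_r)(1+r^{-2}C_s^2\kappa(p,r))+(r^{-2}C_s^2(B_r)\kappa(p,r))^{\frac{2p}{2p-n}}$, and one has to show that the geometric-series product of these converges and produces exactly the exponent $n^2/2$. The additive splitting in the bracket is not cosmetic: it reflects the two regimes in the H\"older/Sobolev interpolation, one where the standard Sobolev factor dominates and one where the curvature contribution does, and both need to be tracked separately throughout the iteration before being recombined into the stated bound.
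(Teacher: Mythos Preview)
Your overall strategy---Bochner, Moser iteration, Sobolev, and H\"older control of the Ricci term---matches the paper's, but there are two genuine gaps and one structural difference you should be aware of.

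\textbf{The drift term cannot be absorbed by Cauchy--Schwarz.} Writing $h=\ln u$, $v=|\nabla h|^2+\lambda$ (the paper works with this shifted quantity, not just $|\nabla h|^2$), the Bochner inequality reads $\Delta v\ge \tfrac{2}{n}v^2-2\langle\nabla h,\nabla v\rangle-2\rho_-v$. A ``weighted Cauchy--Schwarz'' on the drift gives $|\langle\nabla h,\nabla v\rangle|\le v^{1/2}|\nabla v|$, and after testing against $\eta^2 v^{2l-1}$ this produces a term $\int\eta^2 v^{2l+1}$ with a coefficient that does \emph{not} decay in $l$, so it cannot be dominated by the good $\tfrac{2l}{n}\int\eta^2v^{2l+1}$. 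The paper instead integrates the drift by parts using $\Delta h=-v$:
\[
2l\int\eta^2 v^{2l-1}\langle\nabla h,\nabla v\rangle=\int\eta^2\langle\nabla h,\nabla v^{2l}\rangle=\int\eta^2 v^{2l+1}-2\int v^{2l}\eta\langle\nabla\eta,\nabla h\rangle,
\]
which yields at most $2\int\eta^2 v^{2l+1}$ plus lower-order terms; this is absorbed by $\tfrac{2l}{n}\int\eta^2v^{2l+1}$ only when $l\ge n$. Your displayed energy inequality (with $w^{q+1}\eta^2$ on the left and $w^q|\nabla\eta|^2$ on the right) has the wrong exponents and does not reflect this mechanism.

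\textbf{The $L^1$ seed and the exponent $n^2/2$.} Because the drift is only absorbed for $l\ge n$, a single Moser iteration yields only $\sup_{B_{1/2}}v\le C A^{n/2}\|v\|^*_{n,B_\tau}$; the paper then runs a second bootstrap (interpolating $\|v\|^*_n\le(\|v\|^*_1)^{1/n}(\|v\|^*_\infty)^{1-1/n}$ and iterating over shrinking radii) to reach $L^1$, which is what produces the power $A^{n^2/2}$. Your claimed ``testing Bochner with $q=0$'' to get the $L^1$ seed fails for exactly the drift reason above. The paper obtains the seed much more simply from the equation itself: $\int\eta^2|\nabla h|^2=\int\eta^2(-\Delta h-\lambda)\le 2\int\eta\langle\nabla h,\nabla\eta\rangle$, giving $\fint_{B_{7/8}}|\nabla h|^2\le 256\,\vol(B_1)/\vol(B_{7/8})$ with no Sobolev and no curvature input.

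\textbf{A methodological difference.} The paper, following \cite{DaiWeiZhang-18}, takes a radial cutoff $\eta=\varphi(d(\cdot,x))$ and does \emph{not} avoid $\Delta\eta$; instead it bounds $-\eta\Delta\eta$ using the Laplacian comparison Theorem~\ref{psi-comp}, which introduces the error function $\psi=(\Delta d-(n-1)/d)_+$ with $\|\psi\|^*_{2p}\le C(n,p)r^{-1}\kappa(p,r)^{1/2}$. This extra term is then handled by the same H\"older/Young scheme as $\rho_-$ and feeds into the $C_s^2\kappa(p,r)$ factors in the final constant. Your sketch omits this entirely; an approach using only $|\nabla\eta|^2$ may be workable, but it is not the paper's route and you would need to redo the bookkeeping.
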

The proof is a modification of the argument in \cite[Theorem 5.2]{DaiWeiZhang-18}.
\begin{proof}
Let $h=\ln u$ and $v=|\nabla h|^2+\lambda$, such that $\Delta h=-v$. By scaling we assume that $r=1$. We infer from the Bochner formula
\begin{align}
\label{Bochner formula1}
\Delta v
=\Delta|\nabla h|^2
=2|\Hess h|^2+2\langle \nabla h, \nabla \Delta h \rangle+2\Ric(\nabla h, \nabla h)
\geq \frac{2}{n}v^2-2\langle \nabla h, \nabla v\rangle-2\rho_-v.
\end{align}
We abbreviate $B_r:=B_r(x)$. All integrals below are on $B_1$ which we omit. 
Given $\eta\in C^\infty_0(B_1)$ and $l>1$,  integration by parts yields
\begin{align}
\label{grad-squar-term}
\int|\nabla(\eta v^l)|^2
&=-\int\left(\eta v^l\right)\left[v^l\Delta\eta+2\langle \nabla\eta, \nabla v^l\rangle+ \eta\Delta v^l\right]\nonumber\\
&=\int v^{2l}\left(-\eta\Delta\eta\right)-2\int v^l\langle \nabla\eta, \eta \nabla v^l\rangle
-\int \eta^2v^l\left(lv^{l-1}\Delta v +l(l-1)v^{l-2}|\nabla v|^2\right)\nonumber\\
&=\int v^{2l}\left(-\eta\Delta\eta+2|\nabla \eta|^2\right)-2\int v^l\langle \nabla\eta, \nabla \left(\eta v^l\right)\rangle
-l\int \eta^2v^{2l-1} \Delta v-\f{l-1}{l}\int \eta^2|\nabla v^l|^2.
\end{align}
We have 
\begin{align*}
-\f{l-1}{l}\int \eta^2|\nabla v^l|^2
&=-\f{l-1}{l}\int |\nabla(\eta v^l)-v^l\nabla \eta|^2\\
&=-\f{l-1}{l}\int |\nabla(\eta v^l)|^2+\left(2-\f{2}{l}\right)\int v^l\langle \nabla \eta, \nabla(\eta v^l)\rangle
-\f{l-1}{l}\int v^{2l}|\nabla \eta|^2.
\end{align*}
Inserting the above equality into \eqref{grad-squar-term}, using $-2\langle a\nabla \eta,b\nabla \left(\eta v^l\right)\rangle\leq a^2|\nabla \eta|^2+b^2|\nabla \left(\eta v^l\right)|^2$ with $a=\frac{v^l}{\sqrt{l(l-1)}}$ and $b=\frac{\sqrt{l-1}}{\sqrt{l}}$, and  (\ref{Bochner formula1}) gives
\begin{align}
\begin{split}\label{key-ineq}
\int|\nabla(\eta v^l)|^2
&=-\int v^{2l}\eta\Delta\eta+ \f{l+1}{l}\int v^{2l}|\nabla \eta|^2-\f{2}{l}\int v^l\langle \nabla\eta, \nabla \left(\eta v^l\right)\rangle
\\
&\ \ \ 
-l\int \eta^2v^{2l-1} \Delta v-\f{l-1}{l}\int |\nabla(\eta v^l)|^2\\
&\leq -\int v^{2l}\eta\Delta\eta + \f{l+1}{l}\int v^{2l}|\nabla \eta|^2
+\f{l-1}{l}\int |\nabla(\eta v^l)|^2+\f{1}{l(l-1)}\int v^{2l}|\nabla\eta|^2\\
&\ \ \ \ -\frac{2l}{n}\int \eta^2v^{2l+1}+2l\int \eta^2v^{2l-1}\langle \nabla h, \nabla v\rangle+2l\int \eta^2v^{2l}\rho_-
-\f{l-1}{l}\int |\nabla(\eta v^l)|^2\\
&=
-\int v^{2l}\eta\Delta\eta + \f{l}{l-1}\int v^{2l}|\nabla \eta|^2
\\
& \quad
-\frac{2l}{n}\int \eta^2v^{2l+1}+2l\int \eta^2v^{2l-1}\langle \nabla h, \nabla v\rangle+2l\int \eta^2v^{2l}\rho_-.
\end{split}
\end{align}

We infer from $\Delta h=-v$ and $|\nabla h|\leq v^{\f{1}{2}}$
\begin{align*}
2l\int \eta^2v^{2l-1}\langle \nabla h, \nabla v\rangle
&=\int\langle \eta^2\nabla h, \nabla v^{2l} \rangle
=-\int (\Delta h)\eta^2v^{2l}-2\int\langle\nabla h, \eta\nabla \eta \rangle v^{2l}\\
&\leq \int\eta^2v^{2l+1} + \int \eta^2v^{2l+1} + \int v^{2l}|\nabla \eta|^2.
\end{align*}
Combining (\ref{key-ineq}) with above inequality leads to
\begin{align*}
\int|\nabla(\eta v^l)|^2
&\leq -\int v^{2l}\eta\Delta\eta + \f{2l-1}{l-1}\int v^{2l}|\nabla \eta|^2
+(2-\frac{2l}{n})\int \eta^2v^{2l+1}+2l\int \eta^2v^{2l}\rho_-.
\end{align*}
Let $d(y)=d(x,y)$ be the distance function from $x$. We choose $\eta(y)=\varphi(d(y))$, where  $\varphi: [0, \infty) \rightarrow [0,1]$ satisfying 
 $\varphi(t)\equiv 0$ for $t\geq1$ and $\varphi(t)\equiv1$ for $t\in[0,t_0]$ with $t_0\in (0,1)$, and $\varphi'\leq 0$.
 Thus,
\begin{align*}
|\nabla \eta|=|\varphi'|\quad\text{and}\quad
\Delta \eta=\varphi''+\varphi'\Delta d
          \geq \varphi''+\varphi'\left(\psi+\f{n-1}{d}\right)
          \geq -|\varphi''|- |\varphi'|\psi-\f{n-1}{d}|\varphi'|,
\end{align*}
where $\psi=\left(\Delta d-\f{n-1}{d}\right)_+$.
Hence, for $l\geq n$, we have
\begin{align*}
\int|\nabla(\eta v^l)|^2
&\leq C(n)l\int \left[\left(|\varphi''|+ |\varphi'|\psi+\f{|\varphi'|}{d}\right)\eta v^{2l}+|\varphi'|^2v^{2l}+\eta^2v^{2l}\rho_-\right]\\
&\leq C(n)l\int \left[\left(|\varphi''|+\f{|\varphi'|}{d}\right)\eta v^{2l}+ |\varphi'|\psi\eta v^{2l}+|\varphi'|^2v^{2l}+\eta^2v^{2l}\rho_-\right].
\end{align*}
Choose $\beta=\f{n}{n-2}$. Apply the Sobolev inequality (\ref{normalized-Sobolev-const}) to get
\begin{align}
\label{soblev integral}
\left(\fint\left(\eta^2 v^{2l}\right)^\beta\right)^\frac{1}{\beta}
&\leq\ \  C_s^2\left(B_1(x)\right)\fint\Big|\nabla \left(\eta v^l\right)\Big|^2\nonumber\\
&\leq\ \  C_s^2\left(B_1(x)\right)C(n)l
\fint\Big[\left(|\varphi''|+\f{|\varphi'|}{d}\right)\eta v^{2l}+ |\varphi'|\psi\eta v^{2l}+|\varphi'|^2v^{2l}+\eta^2v^{2l}\rho_-\Big]
\end{align}
To control the $\psi$-term, applying H\"{o}lder's inequality and the Laplacian comparison estimate Theorem~\ref{psi-comp}, we have 
\begin{align}
\label{psi terms}
C(n)l C^2_s\fint|\varphi'|\psi\eta v^{2l}
&\leq C(n)l C^2_s\|\psi\|^*_{2p}\|\eta\varphi'v^{2l}\|^*_{\frac{2p}{2p-1}}
\leq C(n)lC^2_sC(n,p)\left(\kappa(p,1)\right)^{\frac{1}{2}}\|\eta\varphi'v^{2l}\|^*_{\frac{2p}{2p-1}}.
\end{align}
Set $\alpha=\frac{p(n-2)}{n(2p-1)}=\frac{1}{\beta}\frac{p}{2p-1}<1$. Since $\eta\in C^\infty_0(B_1)$ we get
\begin{align}
\begin{split}
\label{eta-varphi'-v^{2l}}
\|\eta\varphi'v^{2l}\|^*_{\frac{2p}{2p-1}}
&=\left[\fint\left(\eta^2v^{2l}\right)^{\alpha\beta}\left(|\varphi'|^2v^{2l}\right)^{\frac{p}{2p-1}}\right]^{\frac{2p-1}{2p}}
\\
&
\leq \left[\left(\fint\left(\eta^2v^{2l}\right)^{\beta}\right)^\alpha \left(\fint\left(|\varphi'|^2v^{2l} \right)^{\frac{np}{np+2p-n}}\right)^{\frac{np+2p-n}{n(2p-1)}}\right]^{\frac{2p-1}{2p}}\\
&\leq \left[\left(\fint\left(\eta^2v^{2l}\right)^{\beta}\right)^\alpha \left(\fint|\varphi'|^2v^{2l} \right)^{\frac{p}{2p-1}}\right]^{\frac{2p-1}{2p}}\\
& \leq \epsilon \left(\fint\left(\eta^2v^{2l}\right)^{\beta}\right)^\frac{1}{\beta}+\frac{1}{4\epsilon}\fint|\varphi'|^2v^{2l},
\end{split}
\end{align}
where we used H\"{o}lder inequality and $\frac{np}{np+2p-n}<1$ due to $p>\frac{n}{2}$ in the second inequality.
By setting $\eps=\left(3C_s^2C(n)l C(n,p)\left(\kappa(p,1)\right)^{1/2}\right)^{-1}$ and inserting (\ref{eta-varphi'-v^{2l}}) into (\ref{psi terms}) we obtain
\begin{equation}\label{psi-term}
C(n)lC^2_s\fint\psi\eta|\varphi'|v^{2l}
\leq \frac{1}{3}\left(\fint\left(\eta^2v^{2l}\right)^{\beta}\right)^\frac{1}{\beta}
   +C(n)l^2C_s^4C^2(n,p)\kappa(p,1)\fint|\varphi'|^2v^{2l}.
\end{equation}
Setting $a=a(n,p)=\frac{2p-n}{2(p-1)}>0$ and using Young's inequality 
\begin{align*}
xy\leq \eps x^b+\eps^{-\frac{b^*}{b}}y^{b^*}, \forall x,y\geq 0, b>1, \frac{1}{b^*}+\frac{1}{b}=1,
\end{align*}
where \[b=\frac{p}{(1-a)(p-1)\beta},\ \ b^*=\frac{p}{(p-1)a}\]
we estimate the $\rho$-term by
\begin{align*}
\fint\eta^2v^{2l}\rho_-
&\leq \|\rho_-\|_p^*\left(\fint
     (\eta^2v^{2l})^\frac{p}{p-1}\right)^\frac{p-1}{p}
\leq \kappa(p,1)\left(\fint\eta^2v^{2l}\right)^{\frac{p-1}{p}a}
     \left(\fint(\eta^2v^{2l})^\beta\right)^{\frac{p-1}{p}(1-a)}\\
&\leq \kappa(p,1)\left[\eps \left(\fint(\eta^2v^{2l})^\beta\right)^\frac{1}{\beta}
+\eps^{-\frac{(1-a)\beta}{a}}\left(\fint\eta^2v^{2l}\right)\right].
\end{align*}
By choosing $\eps=\left(3 C(n)lC_s^2\kappa(p,1)\right)^{-1}$, we obtain
\begin{equation}\label{curvature term}
C_s^2 C(n)l\fint\eta^2v^{2l}\rho_-\\
 \leq \frac{1}{3}\left(\fint(\eta^2v^{2l})^\beta\right)^\frac{1}{\beta}
             + C(n,p)\left(lC_s^2\kappa(p,1)\right)^{\frac{2p}{2p-n}}\left(\fint\eta^2v^{2l}\right).
\end{equation}
Inserting (\ref{psi-term}) and (\ref{curvature term}) into (\ref{soblev integral}) gives
\begin{align}
\label{iteration-relationship}
\left(\fint\left(\eta^2 v^{2l}\right)^\beta\right)^\frac{1}{\beta}
&\leq 3C(n)lC_s^2\Big[\fint\left(|\varphi''|+\frac{|\varphi'|}{d}\right)\eta v^{2l}
+\left(1+C^2(n,p)lC_s^2\kappa(p,1)\right) \fint|\varphi'|^2v^{2l}\Big]\nonumber\\
&\ \ \ \ +C(n,p)\left(lC_s^2\kappa(p,1)\right)^\frac{2p}{2p-n}\left(\fint\eta^2v^{2l}\right).
\end{align}
Define $l=\f{\beta^i}{2}\geq n$, $r_i=\tau-\sum^i_{j=0}2^{-j-1}\delta$ with $\tau\in[\f{1}{2}+\delta,1]$ and $\delta\in(0,\f{1}{2}]$, $B_{i}:=B_{r_i}(x)$. Choose cut-off functions $\eta_i=\varphi_i(d)\in C^\infty_0\left(B_{i}\right)$ such that
\[
\eta_i\equiv 1\ \ \ \text{on}\ B_{i+1};\ \ |\varphi'_i|\leq 2^{i+1}, \ \ |\varphi''_i|\leq 2^{2i+2}.
\]
Substituting $\eta_i$ into (\ref{iteration-relationship}) gives
\begin{align*}
\|v\|^*_{\beta^{i+1},B_{i+1}}
&\leq \Big[C(n)\beta^iC_s^2\left[2+\left(1+C^2(n,p)\beta^iC_s^2\kappa(p,1)\right)\right]4^{i+1}
+C(n,p)\left(\beta^iC_s^2\kappa(p,1)\right)^\frac{2p}{2p-n}\Big]^\f{1}{\beta^i} 
\|v\|^*_{\beta^i,B_{i}}\\
&\leq \left[C(n,p)2^{2i}\left(\beta^q\right)^i \left(C_s^2\left(1+C_s^2\kappa(p,1)\right)
   +\left(C_s^2\kappa(p,1)\right)^\frac{2p}{2p-n}\right)\right]^\f{1}{\beta^i}\|v\|^*_{\beta^i, B_{i}}\\
&\leq \left(4\cdot \beta^q\right)^\f{i}{\beta^i}\left(C(n,p)A\right)^\f{1}{\beta^i}\|v\|^*_{\beta^i,B_{i}},
\end{align*}
where
\[
q=\max\left\{2, \frac{2p}{2p-n}\right\}, \ \ \ A=C_s^2\left(1+C_s^2\kappa(p,1)\right)+\left(C_s^2\kappa(p,1)\right)^\frac{2p}{2p-n}.
\]
Iterating from $i_0$ such that $n+1>\beta^{i_0}\geq n$ to $\infty$, since $\sum_{i=0}^\infty \f{1}{\beta^i}=\f{n}{2}$ and $\sum_{i=0}^\infty \f{i}{\beta^i}$ are finite, we obtain for $k<n$
\begin{align*}
\|v\|^*_{\infty, B_{\tau-\delta}}
&\leq C(n,p)A^\f{n}{2}\|v\|^*_{n,B_\tau}
\leq  C(n,p)A^{\f{n}{2}}\left(\|v\|^*_{k,B_\tau}\right)^\f{k}{n}\left(\|v\|^*_{\infty, B_\tau}\right)^{1-\f{k}{n}}.
\end{align*}
Let
\[
\delta_i=2^{-i-3}, \tau_0=\f{1}{2}+\delta_0,\quad \tau_{i+1}=\tau_i+\delta_i,\quad i=0,1,2,\ldots.
\]
Iterating from $0$ to $i$, we have
\begin{align*}
\|v\|^*_{\infty, B_{\tau_0-\delta_0}}
&\leq C(n,p)A^{\f{n}{2}}\left(\|v\|^*_{k, B_{\tau_0}}\right)^\f{k}{n}\left(\|v\|^*_{\infty, B_{\tau_0}}\right)^{1-\f{k}{n}}
\\
&
\leq \prod^i_{j=0}\left[C(n,p)A^{\f{n}{2}}\left(\|v\|^*_{k, B_{\tau_j}}\right)^\f{k}{n}\right]^{\left({1-\f{k}{n}}\right)^j}\left(\|v\|^*_{\infty, B_{\tau_{i}}}\right)^{\left({1-\f{k}{n}}\right)^{i+1}}.
\end{align*}
Let $i\to\infty$, then $\left({1-\f{k}{n}}\right)^{i+1}\to 0$ and $\sum_{j=0}^\infty \left({1-\f{k}{n}}\right)^j=\f{n}{k}$. Hence,
\begin{align*}
\|v\|^*_{\infty, B_\f{1}{2}}\leq \left(C(n,p)A^{\f{n}{2}}\right)^\f{n}{k}\|v\|^*_{k, B_{\tau_\infty}}\leq C^\f{n}{k}(n,p)A^{\f{n^2}{2k}}\|v\|^*_{k, B_\f{7}{8}}.
\end{align*}
Thus, we get
\begin{align}
\begin{split}\label{iretation result}
\sup\limits_{B_\f{1}{2}}|\nabla h|^2\leq \sup\limits_{B_\f{1}{2}}v\leq C(n,p)A^\f{n^2}{2}\fint_{B_\f{7}{8}} v
\end{split}
\end{align}
by setting $k=1$ and using $v=|\nabla h|^2+\lambda$.
Choosing $\eta\in C_0^\infty(B_1)$ with $\eta\equiv 1$ in $B_\f{7}{8}$ and $|\nabla \eta|\leq 8$, we have
\begin{align*}
\fint \eta^2 |\nabla h|^2
=\fint\eta^2 \left(-\Delta h - \lambda\right)
\leq2\fint\eta\langle \nabla h, \nabla \eta\rangle
\leq\frac{1}{2}\fint \eta^2 |\nabla h|^2 +2 \fint |\nabla \eta|^2
\leq \frac{1}{2}\fint \eta^2 |\nabla h|^2 +128.
\end{align*}
Thus,
\begin{align*}\begin{split}\label{gradient-h-esti}
\fint \eta^2 |\nabla h|^2\leq 256,
\end{split}\end{align*}
and hence
\begin{align*}
\fint_{B_\f{7}{8}}(v-\lambda)
=\fint_{B_\f{7}{8}}|\nabla h|^2
\leq \f{\vol\left(B_1\right)}{\vol\left(B_\f{7}{8}\right)}\fint_{B_1} \eta^2 |\nabla h|^2
\leq 256 \f{\vol\left(B_1\right)}{\vol\left(B_\f{7}{8}\right)}.
\end{align*}
Inserting above inequality into (\ref{iretation result}) gives
\begin{align*}
\sup\limits_{B_\f{1}{2}}|\nabla h|^2
&\leq C(n,p)\left(\f{\vol\left(B_1\right)}{\vol\left(B_\f{7}{8}\right)}+\lambda \right)
\left[C_s^2\left(B_1\right)\left(1+C_s^2\left(B_1\right)\kappa(p,1)\right)+\left(C_s^2\left(B_1\right)\kappa(p,1)\right)^\frac{2p}{2p-n}\right]^{\f{n^2}{2}}.
\end{align*}
The desired result can be obtained by scaling.
\end{proof}
 With volume doubling the following upper bound for the first Dirichlet eigenvalue of the ball follows easily from a simple test-function argument.
\begin{lemma}\label{lemma:upper}
Let $(M^n, g)$ be a complete Riemannian manifold. 
Given $p>\frac{n}{2}, D>0$, there exists $\epsilon=\epsilon(n,p)>0$ and $C=C(n)>0$ such that if  $\kappa(p,D)<\epsilon$, then for any $x\in M, \ 0 <r \le D$
\[
\lambda_1(B_r(x))\leq C\, r^{-2}.
\]

\end{lemma}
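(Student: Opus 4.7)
The plan is a direct Rayleigh-quotient computation with a standard Lipschitz cutoff. By the variational characterization, it suffices to exhibit a nontrivial $f \in W_0^{1,2}(B_r(x))$ with $\int_{B_r(x)} |\nabla f|^2 / \int_{B_r(x)} f^2 \leq C(n) r^{-2}$, and the natural candidate is the tent function
\[
f(y) := \bigl(1 - 2 r^{-1} d(x,y)\bigr)_+,
\]
which is Lipschitz, vanishes on $\partial B_r(x)$, equals $1$ on $B_{r/2}(x)$, and satisfies $|\nabla f| \leq 2 r^{-1}$ almost everywhere. This gives the immediate bounds
\[
\int_{B_r(x)} |\nabla f|^2 \leq \frac{4}{r^2}\, \vol(B_r(x)), \qquad \int_{B_r(x)} f^2 \geq \vol(B_{r/2}(x)).
\]

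To turn this into the claimed estimate, I need volume doubling between the radii $r/2$ and $r$. Since we only assume $\kappa(p,D) \leq \epsilon$, I would first shrink $\epsilon$ if necessary so that $2^{1/p}\epsilon \leq \epsilon_0(n,p)$, where $\epsilon_0(n,p)$ is the constant from Theorem~\ref{volume-comp}. Then (\ref{scal-inv-contr-eachother1}) yields $\kappa(x,p,r) \leq 2^{1/p}\kappa(p,D) \leq \epsilon_0$ for every $x \in M$ and every $0 < r \leq D$, so Theorem~\ref{volume-comp} applies and gives
\[
\vol(B_r(x)) \leq 2 \cdot 2^n \vol(B_{r/2}(x)).
\]

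Combining the three displays with the Rayleigh quotient yields
\[
\lambda_1(B_r(x)) \leq \frac{\int_{B_r(x)} |\nabla f|^2}{\int_{B_r(x)} f^2} \leq \frac{4}{r^2} \cdot \frac{\vol(B_r(x))}{\vol(B_{r/2}(x))} \leq \frac{2^{n+3}}{r^2},
\]
so $C(n) = 2^{n+3}$ works. There is no real obstacle here: the only subtlety is the bookkeeping for the curvature smallness, i.e.~converting the global assumption $\kappa(p,D) \leq \epsilon$ into the local smallness $\kappa(x,p,r) \leq \epsilon_0$ required by the volume comparison at every scale $r \leq D$, which is handled by a single invocation of (\ref{scal-inv-contr-eachother1}).
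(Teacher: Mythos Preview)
Your approach is exactly the paper's (Rayleigh quotient with a radial Lipschitz cutoff, then volume doubling from Theorem~\ref{volume-comp}), and your bookkeeping for converting $\kappa(p,D)\le\epsilon$ into $\kappa(x,p,r)\le\epsilon_0$ via \eqref{scal-inv-contr-eachother1} is in fact more explicit than the paper's. However, there is a slip in your test function: the formula $f(y)=(1-2r^{-1}d(x,y))_+$ defines a cone supported in $\overline{B_{r/2}(x)}$, equal to $1$ only at the center and vanishing for $d(x,y)\ge r/2$. It is \emph{not} identically $1$ on $B_{r/2}(x)$, so the lower bound $\int f^2\ge \vol(B_{r/2}(x))$ is false as written.

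The fix is trivial. Either replace the formula by $f(y)=\min\bigl\{1,\,2(1-r^{-1}d(x,y))\bigr\}_+$, which genuinely has the properties you list, or keep your $f$ and observe instead that $f\ge 1/2$ on $B_{r/4}(x)$, giving $\int f^2\ge \tfrac14\vol(B_{r/4}(x))$; in either case volume doubling finishes the job with a constant $C(n)$. The paper takes the first route with the plateau on $B_{r/4}$ rather than $B_{r/2}$, obtaining $C(n)=2\cdot 4^{n+1}$.
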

This is well known to the experts. For completeness we present a proof. 
\begin{proof} Let $\varphi:[0,r] \rightarrow [0,1]$ be the cut off function with $\varphi |_{[0, \tfrac r4]} =1, \varphi |_{[\tfrac 34 r, r]} =0$, and $|\varphi'| \le \tfrac 2r$. Let $d(y) = d(x,y)$ be the distance function from $x$. Using the test function $f(y) = \varphi (d(y))$ we have \[ \lambda_1(B_r(x))\leq \frac{\int_{B_r(x)} |\nabla f|^2}{\int_{B_r(x)} f^2} \le 4r^{-2} \frac{\vol (B_{r})}{\vol (B_{\tfrac{r}{4}})} \le 2 \cdot 4^{n+1} r^{-2}.
\]
Here in the last step we use the volume doubling estimate Theorem~\ref{volume-comp}. 
\end{proof}

Now using the local Sobolev estimate Theorem~\ref{thm:local-Sobolev}, Theorem~\ref{grad-esti-u} gives the following Harnack estimate for the first eigenfunction of $\Omega$ on a ball. 
\begin{theorem}\label{loc-grad-est}
Let $p>\f{n}{2}$ and $\Omega\subset M$ be a bounded with $\d \Omega\neq \varnothing$. Assume that $u_1$ is the positive first Dirichlet eigenfunction of $\Omega$ with $\diam \Omega = D$.
For $B_r(x)\subset\Omega$ satisfying $B_r(x)\cap\d \Omega=\varnothing$, there exist $\eps=\eps(n,p)>0$ and $C=C(n,p)>0$ such that if $\kappa(p,D)\leq \eps$, we have
\begin{align}
\begin{split}\label{local harnack ineq}
\sup\limits_{B_{r/2}(x)}u_1\leq C(n,p)\inf\limits_{B_{r/2}(x)}u_1.
\end{split}
\end{align}
\end{theorem}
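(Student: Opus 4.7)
The plan is to obtain the Harnack inequality in the classical way: derive a uniform gradient bound for $\log u_1$ on $B_{r/2}(x)$ via Theorem~\ref{grad-esti-u}, then integrate along a short path connecting two arbitrary points of $B_{r/2}(x)$. Since $u_1>0$ on $\Omega$ and satisfies $\Delta u_1=-\lambda_1(\Omega)u_1$ on all of $\Omega$, in particular on the ball $B_r(x)\subset\Omega$, Theorem~\ref{grad-esti-u} is directly applicable with $\lambda=\lambda_1(\Omega)$.

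The next step is to control every quantity appearing on the right-hand side of the gradient estimate in terms of $r$ alone (up to constants depending on $n,p$). First, by the scaling inequality \eqref{scal-inv-contr-eachother1}, one has $\kappa(p,r)\leq 2^{1/p}\kappa(p,D)\leq 2^{1/p}\eps$ whenever $r\leq D$, so by choosing $\eps=\eps(n,p)$ sufficiently small we may ensure simultaneously that Theorem~\ref{volume-comp}, Theorem~\ref{thm:local-Sobolev}, and Lemma~\ref{lemma:upper} apply on $B_r(x)$. Volume doubling gives $\vol(B_r(x))/\vol(B_{7r/8}(x))\leq C(n)$, and Theorem~\ref{thm:local-Sobolev} yields $C_s(B_r(x))\leq C(n)\,r$, whence $r^{-2}C_s^2\leq C(n)$ and $r^{-2}C_s^2\kappa(p,r)\leq C(n)\eps$. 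Finally, domain monotonicity for Dirichlet eigenvalues together with Lemma~\ref{lemma:upper} gives
\[
\lambda_1(\Omega)\leq \lambda_1(B_r(x))\leq C(n)\,r^{-2}.
\]
Plugging all of these bounds into Theorem~\ref{grad-esti-u} collapses both factors into constants times $r^{-2}$, producing
\[
\sup_{B_{r/2}(x)}|\nabla\log u_1|^2\leq C(n,p)\,r^{-2}.
\]

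To pass from this pointwise gradient bound to the Harnack inequality, let $y,z\in B_{r/2}(x)$. Since $M$ is complete, there exist minimizing geodesics $\gamma_1$ from $y$ to $x$ and $\gamma_2$ from $x$ to $z$; by the triangle inequality every point on $\gamma_i$ lies within distance $r/2$ of $x$, hence inside $B_{r/2}(x)$ where the gradient bound is valid. Concatenating gives a piecewise geodesic path $\gamma$ of length less than $r$ joining $y$ to $z$ inside $B_{r/2}(x)$, so
\[
|\log u_1(y)-\log u_1(z)|\leq \int_\gamma|\nabla\log u_1|\,ds\leq C(n,p)\,r^{-1}\cdot r=C(n,p),
\]
which, upon exponentiation and taking sup in $y$ and inf in $z$, yields \eqref{local harnack ineq}.

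The only delicate point is bookkeeping: one must shrink $\eps(n,p)$ so that the hypotheses of Theorems~\ref{volume-comp}, \ref{psi-comp}, \ref{thm:local-Sobolev}, \ref{grad-esti-u} and Lemma~\ref{lemma:upper} are all simultaneously met, and verify that the $r$-scaling makes the right-hand side of Theorem~\ref{grad-esti-u} collapse to $C(n,p)r^{-2}$ independently of $D$. No analytic difficulty beyond this; the heart of the argument has already been absorbed into the Nash--Moser iteration behind Theorem~\ref{grad-esti-u}.
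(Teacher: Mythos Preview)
Your proof is correct and follows essentially the same approach as the paper: apply Theorem~\ref{grad-esti-u} to $u_1$ on $B_r(x)$, control each input via domain monotonicity plus Lemma~\ref{lemma:upper}, volume doubling, and the local Sobolev estimate to obtain $\sup_{B_{r/2}(x)}|\nabla\ln u_1|^2\leq C(n,p)r^{-2}$, then integrate along a short path. Your construction of the path through the center $x$ is in fact a slight improvement over the paper's direct minimizing geodesic from $y$ to $z$, since it guarantees the path remains in $B_{r/2}(x)$ where the gradient bound is actually available.
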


\begin{proof}
 Let $\lambda_1>0$ be the first Dirichlet eigenvalue of $\Omega$. Then \[\Delta u_1=-\lambda_1 u_1. \] 
 By domain monotonicity of eigenvalues, $\lambda_1 \le \lambda_1 (B_r(x))$. 

Combining Lemma~\ref{lemma:upper}, volume doubling (cf.~Theorem~\ref{volume-comp}), Theorem \ref{grad-esti-u}, and Theorem~\ref{thm:local-Sobolev}, we obtain
\begin{align}\begin{split}\label{Grad-esti-lnu_1}
\sup\limits_{B_{\f{r}{2}}(x)}|\nabla \ln u_1|^2\leq C_1(n,p)r^{-2}.
\end{split}\end{align}

For any $y,z \in B_{\f{r}{2}}(x)$,  let $\gamma(s):[0,l]\rightarrow \Omega$ be a distance minimizing geodesic joint $y$ and $z$ parametrized by arc length. By (\ref{Grad-esti-lnu_1}), we have
\begin{align*}
\ln \frac{u_1(z)}{u_1(y)} & =\int^l_0 \frac{d}{ds}\ln u_1\left(\gamma(s)\right)\drm s
                     =\int^l_0\langle \nabla \ln u_1, \gamma'(s)\rangle ds
                     \leq \sqrt{C_1(n,p)}r^{-1}l
                     \leq \sqrt{C_1(n,p)}.
\end{align*}
where we used $l\leq r$. Hence,
\begin{equation}\label{harnack-interior-pts}
u_1(z)\leq \exp \left(\sqrt{C_1(n,p)}\right)u_1(y).
\end{equation}
According to the arbitrariness of $y$ and $z$, we have
\[
\sup\limits_{B_{r/2}(x)}u_1\leq C(n,p)\inf\limits_{B_{r/2}(x)}u_1.
\]
This completes the proof of the theorem.
\end{proof}

\section{Global Harnack inequality and the Fundamental Gap Estimate}\label{glob_harn_fundgap}
In this section we derive a global Harnack inequality for the first eigenfunction, therefore getting an estimate on the fundamental gap. The proofs are the same as in  \cite{Oden-Sung-Wang99}, so we omit the proofs. 

Let $d(x)=d(x,\partial\Omega)$ and $\Omega_t=\{x\in \Omega|d(x)\geq t\}$. Denote  $T(\partial\Omega,t) = \Omega\setminus \Omega_t$.

With the assumption that $\Omega$ is $(R,H,K)$-regular, namely the geometry of the boundary and near the   boundary are well controlled, 
and the local Harnack estimate Theorem~\ref{loc-grad-est} proved in the last section, the argument in \cite[Section 2]{Oden-Sung-Wang99} shows that the first positive eigenfunction $u_1$ of $\Omega$ is uniformly bounded from below in $\Omega_\delta$ and is quasi-isometric to $d(x)$ near the boundary, where $\delta=\delta(n,K,H,R)$ is given by \eqref{deltaref}. More precisely, there exist $\epsilon(n,p)>0,\ \delta(n, H,K,R)>0$, $C_1(n,p,H,K,R,D)>0,\ C_2(n,H,K,R)>0$ such that if $\kappa(p,D) < \epsilon$ and we normalize $u_1$ so that $\sup_\Omega u_1 =1$, then
\begin{eqnarray*}
u_1 (x) \ge C_1(n,p,H,K,R,D) \ \ & x \in \Omega_\delta \\
\frac{2}{3\delta}C_1(n,p,H,K,R,D) \, d(x) 
 \le u_1 (x) \le C_2(n,H,K,R)\, d(x) \ \ & x \in   T(\partial\Omega,\delta).
\end{eqnarray*}

The proofs of these estimates follow as in \cite{Oden-Sung-Wang99}, see Lemmas~\ref{u-uper-bdd-rho}-\ref{u-lower-bdd-rho}. From these estimates we can derive the global Harnack inequality that we need, Theorem~\ref{Harnack-inequality-esti}. The details are provided in the \hyperref[appendix]{appendix} for completion.
 Namely, we have 
\begin{theorem}\label{Harnack-inequality-esti}
Let $p>\f{n}{2}$, $D>0$, $H, K\geq0$. There exist  explicitly computable $R_0=R_0(H,K)>0$ and $\eps=\eps(n,p)>0$ such that the following holds ($R_0$ is given in Remark~\ref{R0-condition}).
If
\[
\kappa(p,D)\leq \eps,
\]
then for any $0<R\leq R_0$ there exists a computable constant $C_3=C_3(n, p, D, R, H,K)>0$ such that the first positive  Dirichlet eigenfunction $u_1$ of an $(R,H,K)$-regular domain $\Omega\subset M^n$
satisfies
\[
u_1(x)\leq C_3 u_1(y)
\]
for all $x, y\in \Omega$ with $0<d(x)\leq 2d(y)$.
\end{theorem}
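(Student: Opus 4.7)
The plan is to combine the three pointwise estimates stated just before the theorem: the upper bound $u_1(x) \le C_2 d(x)$ on $T(\partial\Omega,\delta)$, the lower bound $u_1(x) \ge (2C_1/(3\delta))\, d(x)$ on $T(\partial\Omega,\delta)$, and the uniform interior lower bound $u_1 \ge C_1$ on $\Omega_\delta$. Together with the normalization $\sup_\Omega u_1 = 1$ and the diameter bound $d(x) \le D$, these will force $u_1(x)/d(x)$ to be pinched between two positive constants globally on $\Omega$, from which the claimed inequality follows immediately via the hypothesis $d(x) \le 2 d(y)$.

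First I would globalize the upper bound. On $T(\partial\Omega,\delta)$, $u_1(x) \le C_2 d(x)$ is given. For $x \in \Omega_\delta$ one has $d(x) \ge \delta$ and $u_1(x) \le 1$, hence $u_1(x) \le \delta^{-1} d(x)$. Combining the two regions, $u_1(x) \le A\, d(x)$ on all of $\Omega$ with $A := \max\{C_2,\, \delta^{-1}\}$. Next I would globalize the lower bound. On $T(\partial\Omega,\delta)$, $u_1(x) \ge (2C_1/(3\delta))\, d(x)$ is given. For $x \in \Omega_\delta$, $u_1(x) \ge C_1 \ge (C_1/D)\, d(x)$ since $d(x) \le D$. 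Hence $u_1(x) \ge B\, d(x)$ globally with $B := \min\{2C_1/(3\delta),\, C_1/D\}$.

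For $x,y \in \Omega$ with $0 < d(x) \le 2 d(y)$ I then chain
\[
u_1(x) \;\le\; A\, d(x) \;\le\; 2A\, d(y) \;\le\; \frac{2A}{B}\, u_1(y),
\]
which is the assertion with $C_3 := 2A/B$; tracking through $C_1, C_2, \delta$ shows that $C_3$ depends explicitly on $n,p,D,R,H,K$. The hard part is not this algebraic combination but the three preceding boundary-behavior estimates. Those are what force the restriction $R \le R_0 = R_0(K,H)$: the quasi-isometry $u_1 \asymp d$ near $\partial\Omega$ must be produced on a rolling $R$-ball using a barrier built from $|\Sec| \le K$ and $\II \le H$, which requires $R$ sufficiently small in terms of $K,H$. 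The uniform interior lower bound $u_1 \ge C_1$ is obtained by iterating the local Harnack inequality (Theorem~\ref{loc-grad-est}) along a chain of balls of controlled radius inside $\Omega_\delta$, the chain length being bounded by the intrinsic diameter estimate of Lemma~\ref{RHK-John}. Adapting these arguments from \cite{Oden-Sung-Wang99} to the integral Ricci setting uses the volume doubling and local Sobolev estimates of Section~\ref{preliminary}, and is precisely what the appendix carries out.
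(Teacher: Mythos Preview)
Your proof is correct and follows essentially the same approach as the paper's: both combine the three preceding estimates (the boundary upper bound $u_1\le C_2 d$, the boundary lower bound $u_1\ge (2C_1/3\delta)\,d$, and the interior lower bound $u_1\ge C_1$, together with the normalization $\sup_\Omega u_1=1$) to compare $u_1$ at $x$ and $y$. The paper organizes the combination as a four-case analysis according to whether each of $x,y$ lies in $T(\partial\Omega,\delta)$ or in $\Omega_\delta$; your version first globalizes to $B\,d\le u_1\le A\,d$ on all of $\Omega$ and then applies the hypothesis $d(x)\le 2d(y)$ once, which is a slightly cleaner packaging of the same idea. The only cost is a mildly worse constant: your $C_3=2A/B$ picks up an extra factor of order $D/\delta$ from the step $u_1\ge C_1\ge (C_1/D)\,d$ on $\Omega_\delta$, whereas the paper's case analysis avoids this by not routing through $d$ when both points are interior. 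This is immaterial for the theorem as stated.
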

\begin{remark}\label{R0-condition}
The constant $R_0\in(0,1)$ is chosen to satisfy
$$\sqrt{K}\tan(R_0\sqrt{K})\leq \f{H}{2}+\f{1}{2},\quad \frac{H}{\sqrt{K}}\tan(R_0\sqrt{K})\leq \f{1}{2},\quad\text{and}\quad 
\frac{H}{\sqrt{K}}\tanh(R_0\sqrt{K})\leq \f{1}{2}.$$
\end{remark}

We are now in the position to prove Theorem~\ref{main1}. Recall the following fundamental gap estimate obtained in \cite{Oden-Sung-Wang99} which will serve as the basis for the proof.

\begin{theorem}\cite[Theorem 1.2]{Oden-Sung-Wang99}\label{OSW-gap-est}
Let $M$ be an $n$-dimensional compact Riemannian manifold with $\d M\neq \varnothing$ and $\diam M\leq D$. 
Suppose 
\begin{itemize}
\item[-] the first nonzero Neumann eigenvalue $\eta_1$ of $M_{R/2}$ is bounded below by $C_\eta>0$,
\item[-] the volume doubling property holds on $M$ with volume doubling constant $C_0$,
\item[-] the weak Neumann-Poincar{\'e} inequality holds on all balls $B=B_r(x)$ with $2B\subset\Omega$, $2B\cap\partial\Omega=\emptyset$ with weak Neumann-Poincar\'{e} constant $C_Pr$, where $C_P$ is independent of $x$ and $r$,
\item[-] $\d M$ satisfies the interior rolling $R$-ball condition,
\item[-] the first Dirichlet eigenfunction on $M$, $u_1$, satisfies $u_1(x)\leq C_3 u_1(y)$ for all $x,y\in M$ with $0<d(x,\d M)\leq 2 d(y,\d M)$.
\end{itemize}
 Then
\[
\lambda_2-\lambda_1\geq C(C_0, C_P, C_3, R, C_\eta, D).
\]
\end{theorem}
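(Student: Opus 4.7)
The plan is the classical ratio-function approach. Let $u_1, u_2$ be $L^2$-normalized first and second Dirichlet eigenfunctions on $M$, with $u_1 > 0$ in the interior, and set $v := u_2/u_1$. A direct calculation gives $\diver(u_1^2 \nabla v) = -(\lambda_2 - \lambda_1) u_1^2 v$, and since $u_1 = 0$ on $\d M$ all boundary terms vanish upon integration by parts. The orthogonality $\int u_1 u_2 = 0$ becomes $\int_M v \, u_1^2 = 0$, so $v$ has weighted mean zero with respect to the measure $u_1^2 \, \dvol$. It follows that
\[
\lambda_2 - \lambda_1 = \frac{\int_M |\nabla v|^2 u_1^2}{\int_M v^2 u_1^2},
\]
so the theorem reduces to proving a \emph{weighted Neumann--Poincar\'e inequality}
\[
\int_M (v - \bar{v}_w)^2 u_1^2 \leq \frac{1}{C_*} \int_M |\nabla v|^2 u_1^2,
\]
with $C_* = C_*(C_0, C_P, C_3, R, C_\eta, D) > 0$ and $\bar{v}_w$ the weighted average of $v$.

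To establish this I would split $M = M_{R/2} \cup T(\d M, R/2)$ and treat the pieces separately. On the interior $M_{R/2}$, the Harnack hypothesis $u_1(x) \leq C_3 u_1(y)$ for $0 < d(x,\d M) \leq 2 d(y,\d M)$, combined with the rolling $R$-ball condition (which produces points $x_\star$ with $d(x_\star,\d M) \geq R$) and the diameter bound $\diam M \leq D$, allows one to chain Harnack through at most $\lceil D/R \rceil$ steps to compare $u_1$ uniformly across $M_{R/2}$: one obtains $u_1 \asymp \sup_M u_1$ there. With $u_1$ two-sided bounded by a constant on $M_{R/2}$, the weighted Poincar\'e inequality on $M_{R/2}$ is equivalent to the unweighted one, which is exactly $\eta_1(M_{R/2}) \geq C_\eta$. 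In the boundary strip $T(\d M, R/2)$, the rolling $R$-ball condition makes the inward normal exponential map from $\d M$ a diffeomorphism onto $T(\d M, R/2)$; writing points as $(y,t)$ with $y \in \d M$ and $t = d(\cdot, \d M) \in [0, R/2]$, the same Harnack hypothesis forces $u_1(y,t) \asymp t$ along each normal fibre. A one-dimensional Hardy-type estimate
\[
\int_0^{R/2} (f(t) - f(R/2))^2 \, t^2 \, \drm t \leq C \int_0^{R/2} |f'(t)|^2 \, t^2 \, \drm t,
\]
applied fibrewise and then integrated over $\d M$, controls the weighted oscillation on $T(\d M, R/2)$ by the weighted Dirichlet energy plus a trace contribution on $\{t = R/2\} \subset M_{R/2}$.

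To glue the two regions together I would invoke the weak Neumann--Poincar\'e inequality on balls, together with volume doubling, in a Whitney-type chaining argument: cover $M$ by balls whose radii shrink proportionally to $d(\cdot, \d M)$, on each of which $u_1$ is two-sided bounded by iterated applications of $C_3$, and telescope weighted averages between adjacent balls using the weak $(1,1)$-Poincar\'e inequality (with $u_1^2$ absorbed into the constants through Harnack). This produces $|\bar v_{w, M_{R/2}} - \bar v_{w, M}|^2 \int_M u_1^2 \lesssim \int_M |\nabla v|^2 u_1^2$, which combined with the two previous bounds yields the weighted Poincar\'e inequality on all of $M$ with the required dependence of the constant on $(C_0, C_P, C_3, R, C_\eta, D)$.

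The principal obstacle is the analysis inside the boundary strip: the weight $u_1^2$ degenerates quadratically on $\d M$, so standard Poincar\'e inequalities fail there. One genuinely needs both the rolling ball condition (so that the normal exponential map is a focal-point-free diffeomorphism up to scale $R$) and a Hardy-type correction with the correct power of $t$ to recover a workable inequality. A secondary difficulty is pure bookkeeping: ensuring through the chaining argument that every constant depends only on the listed data and not on hidden geometric quantities of $M$ or $\d M$.
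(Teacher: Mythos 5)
First, note that the paper itself does not prove this statement: it is quoted from Oden--Sung--Wang, so the relevant comparison is with the original argument there. Its skeleton --- set $v=u_2/u_1$, observe $\diver(u_1^2\nabla v)=-(\lambda_2-\lambda_1)u_1^2 v$ with vanishing boundary terms, reduce the gap to a Poincar\'e inequality for the weighted measure $u_1^2\,\dvol$, handle the core $M_{R/2}$ via the hypothesis $\eta_1(M_{R/2})\geq C_\eta$, and handle the rest by a Whitney-ball chaining argument using volume doubling and the weak Neumann--Poincar\'e inequality, with the fifth hypothesis making $u_1$ comparable on each Whitney ball --- is essentially what you propose in your first and third steps. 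The genuine problem is your treatment of the boundary strip.

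You claim that the Harnack hypothesis forces $u_1(y,t)\asymp t$ along normal fibres. This does not follow from the listed data. Iterating $u_1(x)\leq C_3 u_1(y)$ for $d(x)\leq 2d(y)$ from scale $R/2$ down to scale $t$ costs a factor $C_3$ per halving, so it only gives $u_1(x)\gtrsim (2t/R)^{\log_2 C_3}\sup_{M_{R/2}}u_1$ together with an upper bound by a constant --- a H\"older-type bound with exponent $\log_2 C_3$, not linear comparability. The linear bounds $u_1\asymp d$ are precisely what this paper (following Oden--Sung--Wang) proves \emph{separately} in the appendix by barrier functions and the Laplacian comparison for $d$, and that step uses the $(R,H,K)$ collar geometry ($\II\leq H$, $|\Sec|\leq K$), which is \emph{not} among the hypotheses of the theorem as stated; those hypotheses are only used elsewhere to verify the Harnack condition. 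Likewise, your fibrewise Hardy inequality needs two-sided Jacobian bounds for the normal exponential map on the strip, which the rolling $R$-ball condition plus doubling alone do not provide. So the boundary-strip argument cannot be closed from the stated data. The repair is also the standard route: drop the fibrewise analysis altogether and observe that on a Whitney ball of radius comparable to $d(\cdot,\partial M)/4$ all points have boundary distances within a factor $2$ of one another, so hypothesis five makes $u_1$ comparable up to $C_3$ there; the weighted Poincar\'e inequality on each such ball then follows from the unweighted weak Neumann--Poincar\'e inequality at the cost of a factor $C_3^2$, and the Boman/Whitney chain argument with doubling, glued to the $\eta_1(M_{R/2})$ bound on the core, yields the global weighted Poincar\'e inequality with a constant depending only on $(C_0,C_P,C_3,R,C_\eta,D)$. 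Two minor points: the number of Harnack steps needed to compare $u_1$ across $M_{R/2}$ is of order $\log_2(2D/R)$, not $D/R$; and since the ball inequality is only a weak (two-scale) Poincar\'e inequality, the chaining must be done in the Haj\l asz--Koskela/Boman form rather than by naive telescoping, as your gluing paragraph implicitly assumes.
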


We apply Theorem~\ref{OSW-gap-est} to our setting to obtain our main result. In the above sections we showed that the volume doubling property, local Neumann-Poincar\'{e} inequality \eqref{weakNP}, and Harnack inequality for the first Dirichlet eigenfunction \eqref{Harnack-inequality-esti} are satisfied. What is left is deriving a lower bound for the first Neumann eigenvalue of $\Omega_{R/2}$ where $\Omega$ is $(R,H,K)$-regular. As in \cite[Page 3543]{Oden-Sung-Wang99} we can show 
 $\Omega_{R/2}$ is $\left(R/4, 2(\sqrt{K}+H), K\right)$-regular. By (\ref{diam-est}) $\diam \Omega_{R/2} \le C(n,H,K,R,D)$. Applying Proposition~\ref{Neumann-eigen} then gives
\[
\eta_1\geq C(n,p,R,D, H, K).
\]
 Then we obtain the desired fundamental gap estimate  by applying Theorem \ref{OSW-gap-est}.

 \bibliographystyle{alpha}
\begin{bibdiv}
\begin{biblist}

\bib{andrewsclutterbuckgap}{article}{
      author={Andrews, Ben},
      author={Clutterbuck, Julie},
       title={Proof of the fundamental gap conjecture},
        date={2011},
        ISSN={0894-0347},
     journal={J. Amer. Math. Soc.},
      volume={24},
      number={3},
       pages={899\ndash 916},
}

\bib{Aubry-07}{article}{
      author={Aubry, E.},
       title={Finiteness of {$\pi_1$} and geometric inequalities in almost
  positive {R}icci curvature},
        date={2007},
     journal={Ann. Sci. \'Ecole Norm. Sup. (4)},
      volume={40},
      number={4},
       pages={675\ndash 695},
}

\bib{BourniClutterbuckNguyenStancuWeiWheeler2}{article}{
      author={Bourni, T.},
      author={Clutterbuck, J.},
      author={Nguyen, X.~H.},
      author={Stancu, A.},
      author={Wei, Guofang},
      author={Wheeler, V.},
       title={The vanishing of the fundamental gap of convex domains in
  $\mathbb{H}^n$},
        date={2020},
     journal={to appear in Ann. Henri Poincar{\'e}},
        note={arXiv:2005.11784},
}

\bib{ChenH-21}{article}{
      author={Chen, H.},
       title={Chiti-type {R}everse {H}{\"o}lder {I}nequality and {T}orsional
  {R}igidity {U}nder {I}ntegral {R}icci {C}urvature {C}ondition},
        date={2021},
     journal={Potential Anal.},
        note={DOI:10.1007/s11118-020-09887-9},
}

\bib{Chen-21}{article}{
      author={Chen, Lina},
       title={Segment {I}nequality and {A}lmost {R}igidity {S}tructures for
  {I}ntegral {R}icci {C}urvature},
        date={202104},
     journal={International Mathematics Research Notices},
        note={rnab065},
}

\bib{Cheng-Oden97}{article}{
      author={Cheng, Shiu-Yuen},
      author={Oden, Kevin},
       title={Isoperimetric inequalities and the gap between the first and
  second eigenvalues of an {E}uclidean domain},
        date={1997},
        ISSN={1050-6926},
     journal={J. Geom. Anal.},
      volume={7},
      number={2},
       pages={217\ndash 239},
}

\bib{DaiSetoWei18}{article}{
      author={Dai, Xianzhe},
      author={Seto, Shoo},
      author={Wei, Guofang},
       title={Fundamental gap estimate for convex domains on sphere -- the case
  $n=2$},
        date={2018},
     journal={to appear in Comm. in Analysis and Geometry},
        note={arXiv:1803.01115},
}

\bib{DaiSetoWei-survey}{article}{
      author={Dai, Xianzhe},
      author={Seto, Shoo},
      author={Wei, Guofang},
       title={Fundamental gap comparison},
        date={2019},
     journal={Surveys in Geometric Analysis 2018, Science Press Beijing,
  Beijing},
       pages={1\ndash 16},
}

\bib{DaiWeiZhang-18}{article}{
      author={Dai, Xianzhe},
      author={Wei, Guofang},
      author={Zhang, Zhenlei},
       title={Local {S}obolev constant estimate for integral {R}icci curvature
  bounds},
        date={2018},
        ISSN={0001-8708},
     journal={Adv. Math.},
      volume={325},
       pages={1\ndash 33},
}

\bib{DaiWeiZhang-PAMS}{article}{
      author={Dai, Xianzhe},
      author={Wei, Guofang},
      author={Zhang, Zhenlei},
       title={Neumann isoperimetric constant estimate for convex domains},
        date={2018},
        ISSN={0002-9939},
     journal={Proc. Amer. Math. Soc.},
      volume={146},
      number={8},
       pages={3509\ndash 3514},
}

\bib{Gallot-88}{article}{
      author={Gallot, Sylvestre},
       title={Isoperimetric inequalities based on integral norms of {R}icci
  curvature},
        date={1988},
     journal={Ast\'erisque},
      number={157-158},
       pages={191\ndash 216},
        note={Colloque Paul L{\'e}vy sur les Processus Stochastiques
  (Palaiseau, 1987)},
}

\bib{HajlaszKoskela-00}{article}{
      author={Haj{\l}asz, Piotr},
      author={Koskela, Pekka},
       title={Sobolev met {P}oincar\'{e}},
        date={2000},
        ISSN={0065-9266},
     journal={Mem. Amer. Math. Soc.},
      volume={145},
      number={688},
       pages={x+101},
}

\bib{HeWei20}{article}{
      author={He, Chenxu},
      author={Wei, Guofang},
      author={Zhang, Qi~S.},
       title={Fundamental gap of convex domains in the spheres},
        date={2020},
        ISSN={0002-9327},
     journal={Amer. J. Math.},
      volume={142},
      number={4},
       pages={1161\ndash 1191},
}

\bib{Li12}{book}{
      author={Li, Peter},
       title={Geometric analysis},
      series={Cambridge Studies in Advanced Mathematics},
   publisher={Cambridge University Press, Cambridge},
        date={2012},
      volume={134},
        ISBN={978-1-107-02064-1},
}

\bib{Oden-Sung-Wang99}{article}{
      author={Oden, Kevin},
      author={Sung, Chiung-Jue},
      author={Wang, Jiaping},
       title={Spectral gap estimates on compact manifolds},
        date={1999},
     journal={Trans. Amer. Math. Soc.},
      volume={351},
      number={9},
       pages={3533\ndash 3548},
}

\bib{Oden94}{book}{
      author={Oden, Kevin~David},
       title={Isoperimetric inequalities and the gap between the first and
  second eigenvalues},
   publisher={ProQuest LLC, Ann Arbor, MI},
        date={1994},
        note={Thesis (Ph.D.)--University of California, Los Angeles},
}

\bib{PetersenWei-97}{article}{
      author={Petersen, P.},
      author={Wei, G.},
       title={Relative volume comparison with integral curvature bounds},
        date={1997},
        ISSN={1016-443X},
     journal={Geom. Funct. Anal.},
      volume={7},
      number={6},
       pages={1031\ndash 1045},
}

\bib{PetersenWei-01}{article}{
      author={Petersen, Peter},
      author={Wei, Guofang},
       title={Analysis and geometry on manifolds with integral {R}icci
  curvature bounds. {II}},
        date={2001},
        ISSN={0002-9947},
     journal={Trans. Amer. Math. Soc.},
      volume={353},
      number={2},
       pages={457\ndash 478},
}

\bib{OlivePostRose-20}{article}{
      author={Post, Olaf},
      author={Ramos~Oliv{\'e}, Xavier},
      author={Rose, Christian},
       title={Quantitative {S}obolev extensions and the {N}eumann heat kernel
  for integral {R}icci curvature conditions},
        date={2020},
        note={arXiv:2007.04120 [math.DG]},
}

\bib{Ramos-Olive19}{article}{
      author={Ramos~Oliv\'{e}, Xavier},
       title={Neumann {L}i-{Y}au gradient estimate under integral {R}icci
  curvature bounds},
        date={2019},
        ISSN={0002-9939},
     journal={Proc. Amer. Math. Soc.},
      volume={147},
      number={1},
       pages={411\ndash 426},
}

\bib{Rose-17a}{article}{
      author={Rose, Christian},
       title={Heat kernel upper bound on {R}iemannian manifolds with locally
  uniform {R}icci curvature integral bounds},
        date={2017},
        ISSN={1050-6926},
     journal={J. Geom. Anal.},
      volume={27},
      number={2},
       pages={1737\ndash 1750},
}

\bib{Rose-17}{article}{
      author={Rose, Christian},
      author={Stollmann, Peter},
       title={The {K}ato class on compact manifolds with integral bounds on the
  negative part of {R}icci curvature},
        date={2017},
        ISSN={0002-9939},
     journal={Proc. Amer. Math. Soc.},
      volume={145},
      number={5},
       pages={2199\ndash 2210},
}

\bib{SetoWangWei16}{article}{
      author={Seto, Shoo},
      author={Wang, Lili},
      author={Wei, Guofang},
       title={Sharp fundamental gap estimate on convex domains of sphere},
        date={2019},
        ISSN={0022-040X},
     journal={J. Differential Geom.},
      volume={112},
      number={2},
       pages={347\ndash 389},
}

\bib{Wang-20}{article}{
      author={Wang, Wen},
       title={Harnack inequality, heat kernel bounds and eigenvalue estimates
  under integral ricci curvature bounds},
        date={2020},
        ISSN={0022-0396},
     journal={J. Differential Equations},
      volume={269},
      number={2},
       pages={1243\ndash 1277},
}

\bib{ZhangZhu-17}{article}{
      author={Zhang, Qi~S.},
      author={Zhu, Meng},
       title={Li-{Y}au gradient bound for collapsing manifolds under integral
  curvature condition},
        date={2017},
        ISSN={0002-9939},
     journal={Proc. Amer. Math. Soc.},
      volume={145},
      number={7},
       pages={3117\ndash 3126},
         url={https://doi.org/10.1090/proc/13418},
      review={\MR{3637958}},
}

\end{biblist}
\end{bibdiv}
 \newpage

\begin{appendix}
\section{Global Harnack inequality of the first Dirichlet eigenfunction}\label{Global-Harnack-inequality}\label{appendix}
In this appendix we include, for completeness, the estimates needed in Section~\ref{glob_harn_fundgap}, which follow directly from the proofs in \cite[Section 2]{Oden-Sung-Wang99}. The main goal is to prove Theorem~\ref{Harnack-inequality-esti}.

Let $d(x)=d(x,\partial\Omega)$ and $\Omega_t=\{x\in \Omega|d(x)\geq t\}$. Denote  $T(\partial\Omega,t) = \Omega\setminus \Omega_t$. 
To obtain Theorem~\ref{Harnack-inequality-esti}, on $\Omega_\delta$ we will use the local Harnack inequality, while on the tubular neighborhood of $\partial\Omega$, we will use that $d(x)$ and the first Dirichlet eigenfunction are comparable in that region.
Essential for the comparison is a bound on $\Delta d(x)$.
The Laplacian comparison estimate for $d$ has the following form.
\begin{proposition}\label{lap-com-thm}
Let $\Omega\subset M^n$ be a $(R,H,K)$-regular domain.
Then we have 
\[
-\frac{1}{2\delta}\leq\Delta d\leq \frac{1}{2\delta}\quad\text{and}\quad\ \ \Delta d^2\geq 1
\]
on $T(\partial\Omega,\delta)$ with
\begin{equation}\label{deltaref}
\delta=\f{1}{4(n-1)(\sqrt{K}+H+\frac{1}{R})}.
\end{equation}
\end{proposition}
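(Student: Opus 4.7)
My plan is the classical Riccati / shape-operator comparison for the distance function to the boundary. Combining $\II\leq H$ and the interior rolling $R$-ball condition supplies the initial data at $\partial\Omega$, and $|\Sec|\leq K$ controls the propagation along normal geodesics. Fix $x \in T(\partial\Omega, \delta)$, let $y \in \partial\Omega$ be a nearest point, and let $\gamma : [0, d(x)] \to \overline\Omega$ be the minimizing unit-speed geodesic from $y$ to $x$. Since $d(x) < \delta < R$, the rolling $R$-ball condition guarantees no focal points along $\gamma$, so the normal exponential map from $\partial\Omega$ is a diffeomorphism onto $T(\partial\Omega,\delta)$; in particular $d$ is smooth there with $|\nabla d| = 1$, and $\Delta d(\gamma(t)) = \Tr U(t)$, where $U(t)$ is the shape operator at $\gamma(t)$ of the level hypersurface $\{d = t\}$ taken with respect to $\nabla d$.

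The first step is to bound $\Delta d$ at $\partial \Omega$. The upper bound is immediate: $\Delta d(y) = \Tr \II_{\partial\Omega}(y) \leq (n-1) H$. For the lower bound, pick an interior rolling $R$-ball $B_R(q) \subset \Omega$ with $\overline{B_R(q)}\cap\partial\Omega=\{y\}$; since $\partial B_R(q)$ touches $\partial\Omega$ at $y$ from the interior with the same inward normal, a standard tangency comparison yields $\II_{\partial\Omega}(y) \succeq \II_{\partial B_R(q)}(y)$ as symmetric endomorphisms. The shape operator of $\partial B_R(q)$ w.r.t.\ its inward normal equals $-\Hess d_q$ restricted to the tangent space of the sphere, and Hessian comparison for $d_q$ under $|\Sec|\leq K$ gives $\II_{\partial B_R(q)}(y) \succeq -\sqrt K \coth(\sqrt K R)\, I$. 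For $R \leq R_0$ as in Remark~\ref{R0-condition} one checks $\sqrt K \coth(\sqrt K R) \leq \sqrt K + 1/R$, so $\Delta d(y) \geq -(n-1)(\sqrt K + 1/R)$, and combining, $|\Delta d(y)| \leq (n-1)(\sqrt K + H + 1/R) = 1/(4\delta)$.

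The second step is to propagate these bounds along $\gamma$ via the matrix Riccati equation $U'(t) + U(t)^2 + R(\cdot,\gamma')\gamma' = 0$ coming from the Bochner identity. On principal axes each eigenvalue $\lambda_i(t)$ of $U(t)$ satisfies the scalar Riccati inequality $-\lambda_i^2 - K \leq \lambda_i'(t) \leq -\lambda_i^2 + K$ under $|\Sec|\leq K$, with initial data $|\lambda_i(0)| \leq c := \sqrt K + H + 1/R$ by Step 1. A short bootstrap argument then applies: as long as $|\lambda_i(t)| \leq 2c$, the Riccati inequality gives $|\lambda_i'(t)| \leq 4c^2 + K = O(c^2)$, hence $|\lambda_i(t) - \lambda_i(0)| = O(c^2)\cdot t$. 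For $t \leq \delta = 1/(4(n-1)c)$ this perturbation is $O(c/(n-1))$, which keeps $|\lambda_i(t)| \leq 2c$ and closes the bootstrap. Summing in $i$ gives $|\Delta d(\gamma(t))| \leq (n-1)\cdot 2c = 1/(2\delta)$ for all $t \in [0,\delta]$, which is the stated bound.

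For the second assertion, direct computation yields $\Delta d^2 = 2|\nabla d|^2 + 2 d \,\Delta d = 2 + 2 d\, \Delta d$. On $T(\partial\Omega,\delta)$ one has $d \leq \delta$ and $|\Delta d| \leq 1/(2\delta)$ by the first part, so $|d\,\Delta d| \leq 1/2$ and therefore $\Delta d^2 \geq 1$. The main obstacle is the lower bound on $\Delta d$: the scalar trace Riccati $(\Delta d)' \leq -(\Delta d)^2/(n-1) + (n-1)K$ only yields an upper bound via Cauchy--Schwarz, so one must really track all eigenvalues of $U(t)$ (or its operator norm) individually and verify that the rolling-ball initial bound survives forward propagation with only a bounded amplification factor. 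The explicit form of $\delta$ requires the smallness of $R_0$ from Remark~\ref{R0-condition}, both to secure the comparison $\sqrt K \coth(\sqrt K R) \leq \sqrt K + 1/R$ and to keep the eigenvalue ODE from blowing up on $[0,\delta]$.
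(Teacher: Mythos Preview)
Your overall strategy---two-sided principal curvature bounds at $\partial\Omega$, then Riccati propagation---is sound and in fact makes explicit a point the paper handles via a black-box citation. The paper does \emph{not} redo the Riccati argument; it quotes Lemma~\ref{OSW-lap-com} (from Oden's thesis), which gives the closed-form comparison
\[
\sum_{i=1}^{n-1}\frac{-\sqrt{K}\tan(\sqrt{K}d)-k_i}{1-\tfrac{k_i}{\sqrt{K}}\tan(\sqrt{K}d)}
\;\leq\;\Delta d\;\leq\;
\sum_{i=1}^{n-1}\frac{-\sqrt{K}\tanh(\sqrt{K}d)-k_i}{1-\tfrac{k_i}{\sqrt{K}}\tanh(\sqrt{K}d)},
\]
and then bounds these explicit fractions using $k_i\le H$ together with the smallness conditions on $R_0$ from Remark~\ref{R0-condition} (which force the denominators to stay $\geq 1/2$ and keep $\sqrt{K}d\le \pi/4$). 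So the paper's route is: exact Riccati solutions first, algebra second. Your route is: crude a~priori bounds on the Riccati flow. Your use of the rolling $R$-ball to get $k_i\ge -\sqrt{K}\coth(\sqrt{K}R)\ge -(\sqrt{K}+1/R)$ is correct and is exactly what is needed to make the two-sided bound work.

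The genuine gap is in your bootstrap. Under the a~priori assumption $|\lambda_i(t)|\le 2c$ you get $|\lambda_i'(t)|\le 4c^2+K\le 5c^2$ (since $K\le c^2$), hence over $t\le\delta=1/(4(n-1)c)$ the drift is at most $5c/(4(n-1))$. For $n=2$ this is $5c/4>c$, so from $|\lambda_i(0)|\le c$ you only get $|\lambda_i(t)|\le 9c/4$, and the bootstrap does \emph{not} close at the threshold $2c$ you need to conclude $|\Delta d|\le 2(n-1)c=1/(2\delta)$. The big-$O$ phrasing hides that the implicit constant must be $\le n-1$, which fails in low dimension. The difficulty is entirely on the lower side: $\lambda_i'\ge -\lambda_i^2-K$ has the quadratic term working against you, and a linear-in-time bound is too weak.

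The fix is to replace the bootstrap by the exact comparison ODE, which is precisely what Lemma~\ref{OSW-lap-com} packages. Comparing $\lambda_i$ with the solution of $\mu'=-\mu^2-K$, $\mu(0)=-c$, gives
\[
\lambda_i(t)\;\ge\;-\,\frac{\sqrt{K}\tan(\sqrt{K}t)+c}{\,1-(c/\sqrt{K})\tan(\sqrt{K}t)\,},
\]
and since $\sqrt{K}\delta\le 1/(4(n-1))$ one checks directly that the denominator stays $\ge 1/2$ and the right-hand side stays $\ge -2c$ on $[0,\delta]$; similarly for the upper bound via $\mu'=-\mu^2+K$. With that sharper comparison your argument goes through and yields exactly $|\Delta d|\le 1/(2\delta)$. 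Your derivation of $\Delta d^2\ge 1$ from $|\Delta d|\le 1/(2\delta)$ and $d\le\delta$ is correct and identical to the paper's.
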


The proof of Prop.~\ref{lap-com-thm} follows from the following lemma.

\begin{lemma}\cite[Lemma 3.2.3]{Oden94}  \cite[Lemma 2.1]{Oden-Sung-Wang99}\label{OSW-lap-com}
Let $N$ be a compact Reimannnian manifold with boundary $\partial N$, and $a, b$ be two constants such that $b\geq a>0$, $b\geq1$ and the sectional curvature satisfies that  $-a^2\leq K_N\leq b^2$. Then $d(x)=d(x,\d N)$ is at least $C^2$ on $N\setminus N_\delta$ if $\d N$ is $C^{k-1}$ and
\begin{align*}
\sum_{i=1}^{n-1}\frac{-b\tan(bd)-k_i(y)}{1-\frac{k_i(y)}{b}\tan(bd)}\leq\Delta d
\leq \sum_{i=1}^{n-1}\frac{-a\tanh(ad)-k_i(y)}{1-\frac{k_i(y)}{a}\tanh(ad)},
\end{align*}
where $k_i(y)$ is the $i$-th principal curvature with respect to the inward pointing unit normal at the unique point $y\in\d N$ such that $d(x)=d(x,y)$ and $\delta$ is as in \eqref{deltaref}.
\end{lemma}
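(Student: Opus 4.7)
The plan is to apply Lemma~\ref{OSW-lap-com} pointwise on $T(\partial\Omega,\delta)$ to $d(x)=d(x,\partial\Omega)$, and then read off both claims. First, the interior rolling $R$-ball condition guarantees that $\partial\Omega$ has no focal points within distance $R$, so $d$ is $C^{2}$ throughout $T(\partial\Omega,R)$; combined with $|\Sec|\leq K$ on $T(\partial\Omega,R)$ this lets me take $a=b=\max\{\sqrt{K},1\}$ in the lemma, while the bound on $\II$ gives $|k_i(y)|\leq H$ at the foot $y\in\partial\Omega$ of each $x$. The explicit choice
\[
\delta=\frac{1}{4(n-1)(\sqrt{K}+H+1/R)}
\]
satisfies $\delta\leq R/(4(n-1))\leq R$, so the lemma is in force throughout $T(\partial\Omega,\delta)$.

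For the pointwise bound on $\Delta d$ I would exploit that the two summand functions
\[
k\mapsto\frac{-a\tanh(ad)-k}{1-(k/a)\tanh(ad)}\qquad\text{and}\qquad k\mapsto\frac{-b\tan(bd)-k}{1-(k/b)\tan(bd)}
\]
are monotone decreasing in $k$ (a short differentiation gives $k$-derivative $-(1+\tanh^{2}(ad))/(\text{denom})^{2}$, and similarly for $\tan$), so their extrema over $|k|\leq H$ are attained at $k=\pm H$. With $\delta\leq R_{0}$ the constraints collected in Remark~\ref{R0-condition} give $(H/\sqrt{K})\tan(\sqrt{K}\delta)\leq 1/2$, $\sqrt{K}\tan(\sqrt{K}\delta)\leq (H+1)/2$, and the analogous $\tanh$-bound, which keeps each denominator $\geq 1/2$ and each numerator bounded in absolute value by a multiple of $\sqrt{K}+H$. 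Summing over $i=1,\dots,n-1$ yields
\[
|\Delta d|\leq 2(n-1)(\sqrt{K}+H)\leq 2(n-1)\bigl(\sqrt{K}+H+\tfrac{1}{R}\bigr)=\frac{1}{2\delta},
\]
which is the first claim.

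The second claim is then immediate: since $d$ is a Riemannian distance function, $|\nabla d|=1$ on the smooth locus, so on $T(\partial\Omega,\delta)$
\[
\Delta d^{2}=\diver(2d\,\nabla d)=2|\nabla d|^{2}+2d\,\Delta d=2+2d\,\Delta d\geq 2-2\delta\cdot\tfrac{1}{2\delta}=1,
\]
using $d\leq\delta$ and the bound just established.

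The main obstacle is the bookkeeping of constants: verifying that the denominators in Lemma~\ref{OSW-lap-com} stay uniformly bounded away from zero (which is precisely why the conditions in Remark~\ref{R0-condition} were built into $R_{0}$), and that the sum of $n-1$ summands matches the prescribed target $1/(2\delta)$ with the explicit value of $\delta$. The $1/R$ term in $\delta$ plays no direct role in bounding $\Delta d$ itself; it is present only to force $\delta\leq R$, securing the smoothness of $d$ on which Lemma~\ref{OSW-lap-com} relies.
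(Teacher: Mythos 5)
Your proposal does not prove the statement at hand: Lemma~\ref{OSW-lap-com} is itself the two-sided comparison
\[
\sum_{i=1}^{n-1}\frac{-b\tan(bd)-k_i(y)}{1-\frac{k_i(y)}{b}\tan(bd)}\;\leq\;\Delta d\;\leq\;\sum_{i=1}^{n-1}\frac{-a\tanh(ad)-k_i(y)}{1-\frac{k_i(y)}{a}\tanh(ad)},
\]
together with the $C^2$ regularity of $d(\cdot,\partial N)$ off a collar, and your argument begins by \emph{invoking} exactly this lemma. What you actually establish is the downstream estimate $|\Delta d|\leq \frac{1}{2\delta}$ and $\Delta d^2\geq 1$ on $T(\partial\Omega,\delta)$, i.e.\ Proposition~\ref{lap-com-thm}, and there your computation essentially reproduces the paper's appendix proof (with one harmless slip: the $k$-derivative in the $\tanh$ case is $-(1-\tanh^2(ad))/(\text{denom})^2$, not $-(1+\tanh^2(ad))/(\text{denom})^2$, though monotonicity in $k$ still holds; the paper sidesteps monotonicity by just using $k_i(y)\leq H$ and $|\tanh|<1$ directly). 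So with respect to the stated lemma the proposal is circular, and with respect to the proposition it is a restatement of the known argument.

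A genuine proof of Lemma~\ref{OSW-lap-com} (which the paper itself only cites, from Oden's thesis and Oden--Sung--Wang) has different content: one works in Fermi coordinates via the normal exponential map of $\partial N$, notes that $\Delta d$ at a point $x$ with foot point $y$ is (up to sign convention) the trace of the shape operator $S(t)$ of the parallel hypersurface $\{d=t\}$ along the unit-speed normal geodesic from $y$, and that $S$ satisfies the matrix Riccati equation $S'+S^2+R(\cdot,\gamma')\gamma'=0$ with initial data the shape operator of $\partial N$ at $y$. Comparing eigenvalue-wise with the scalar Riccati solutions for constant curvature $b^2$ and $-a^2$ with initial values $k_i(y)$ produces precisely the quotients $\frac{-b\tan(bd)-k_i}{1-(k_i/b)\tan(bd)}$ and $\frac{-a\tanh(ad)-k_i}{1-(k_i/a)\tanh(ad)}$, and the curvature and boundary bounds guarantee these comparison solutions (hence $d$ itself, by a focal-point estimate) stay regular for $d<\delta$ with $\delta$ as in \eqref{deltaref}. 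None of this Riccati/focal-point analysis appears in your write-up; in particular, the claim that the rolling $R$-ball condition alone rules out focal points within distance $R$ is not justified and is not even available here, since the lemma is stated for a general compact manifold with boundary under two-sided sectional curvature bounds, without any rolling-ball hypothesis.
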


\begin{proof}[Proof of Proposition \ref{lap-com-thm}]
Since $\delta<R$, $T(\partial \Omega, \delta)\subset T(\partial \Omega, R)$. For any $x\in T(\partial \Omega, \delta)$ and $y\in \d \Omega$ such that $d(x)=d(x,y)$, using Lemma \ref{OSW-lap-com},  we have
\begin{align}\label{upp-bdd-lap}
\Delta d\geq \sum^{n-1}_{i=1}
\f{-\sqrt{K}\tan(\sqrt{K}d)-k_i(y)}{1-\f{k_i(y)}{\sqrt{K}}\tan(\sqrt{K}d)} 
\quad\text{and}\quad
\Delta d
\leq \sum^{n-1}_{i=1}\frac{-\sqrt{K}\tanh(\sqrt{K}d)-k_i(y)}{1-\frac{k_i(y)}{\sqrt{K}}\tanh(\sqrt{K}d)},
\end{align}
where $k_i(y)$ is the principal curvature at $y$ with respect to the unit inward-pointing normal vector.
Since $\II\leq H$, we have
$k_i(y)\leq H$, and since $R\in (0, R_0]$
\begin{align}\label{Denominator-est2}
1-\frac{k_i(y)}{\sqrt{K}}\tan\left(\sqrt{K}d\right)\geq 1-\frac{H}{\sqrt{K}}\tan\left(\sqrt{K}R\right)\geq\frac{1}{2}
\end{align}
and
\begin{align*}
1-\frac{k_i(y)}{\sqrt{K}}\tanh\left(\sqrt{K}d\right)\geq 1-\frac{H}{\sqrt{K}}\tanh\left(\sqrt{K}R\right)\geq\frac{1}{2}.
\end{align*}
Combining (\ref{Denominator-est2}) with (\ref{upp-bdd-lap}) and $-1<\tanh x<1$, we obtain
\begin{equation*}
\Delta d
\leq 2(n-1)\left(\sqrt{K}+H\right)
\leq \frac{1}{2\delta}.
\end{equation*}
Since $k_i(y)\leq H$, and $\sqrt{K}d\leq \sqrt{K}\delta\leq \frac{\pi}{4}$,
we have
\begin{equation*}
\Delta d
\geq \sum^{n-1}_{i=1}
\frac{-\sqrt{K}\tan(\sqrt{K}d)-k_i(y)}{1-\frac{k_i(y)}{\sqrt{K}}\tan(\sqrt{K}d)}
\geq -2\sum^{n-1}_{i=1}(\sqrt{K}+H) \\
\geq-\frac{1}{2\delta}.
\end{equation*}
Hence, we obtain
\[
\Delta d^2=2d\Delta d+2|\nabla d|^2\geq -\frac{d}{\delta} +2\geq 1.
\]
\end{proof}

Proposition \ref{lap-com-thm} enables us to replace $\sup\limits_{\Omega} u_1$ by $d$ near the boundary, as the following lemmas show.
\begin{lemma}\label{u-uper-bdd-rho}
Let $\Omega\subset M^n$ be a $(R,H,K)$-regular domain. Suppose $u_1$ is the first Dirichlet eigenfunction on $\Omega$. Then there exists $\eps=\eps(n,p)>0$ such that if $\kappa(p, D)<\eps$, we have 
\[
u_1(x)\leq \frac{4 C(n)}{\delta^3}d(x), \quad x\in T(\partial\Omega, \delta),
\]
where $C(n)$ is the constant from Lemma \ref{lemma:upper}.
If $x_0\in\overline\Omega$ with $u_1(x_0)=\sup\limits_{x\in \Omega}u_1(x)$, we have
$d(x_0)\geq  \f{\delta^3}{4 C(n)}$.
\end{lemma}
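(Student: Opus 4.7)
I plan to run a barrier argument in the spirit of \cite[Lemma~2.2]{Oden-Sung-Wang99}. Normalize $u_1$ so that $\sup_{\Omega} u_1 = 1$, attained at some $x_0\in\Omega$. The two inputs are Proposition~\ref{lap-com-thm} (yielding $\Delta d^2 \geq 1$ and $|\Delta d|\leq 1/(2\delta)$ on $T(\partial\Omega,\delta)$) and Lemma~\ref{lemma:upper}. Since $\partial\Omega$ satisfies the interior rolling $R$-ball condition, $\Omega$ contains a geodesic ball $B_R(q)$, and Dirichlet domain monotonicity combined with Lemma~\ref{lemma:upper} yields $\lambda_1(\Omega) \leq \lambda_1(B_R(q)) \leq C(n) R^{-2}$. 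Using $\delta\leq R/(4(n-1))$ from \eqref{deltaref}, this becomes $\lambda_1(\Omega)\leq \widetilde C(n)\,\delta^{-2}$.

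Next I would construct a quadratic barrier $\phi(x) = A\,d(x) - B\,d(x)^2$ on $T(\partial\Omega,\delta)$ with positive constants $A,B$, and set $\psi := \phi - u_1$. From Proposition~\ref{lap-com-thm},
\[
\Delta\psi \;=\; A\,\Delta d - B\,\Delta d^2 + \lambda_1 u_1 \;\leq\; \frac{A}{2\delta} - B + \lambda_1,
\]
so the choice $B\geq A/(2\delta)+\lambda_1$ makes $\psi$ superharmonic on $T(\partial\Omega,\delta)$. Its boundary values on $\partial T(\partial\Omega,\delta) = \partial\Omega\cup\{d=\delta\}$ are: $\psi = 0$ on $\partial\Omega$ (both $u_1$ and $d$ vanish there), and $\psi \geq A\delta - B\delta^2 - 1$ on $\{d=\delta\}$. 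Coupling with $B = A/(2\delta)+\lambda_1$, the requirement $A\delta - B\delta^2 \geq 1$ reduces to $A\delta/2 - \lambda_1\delta^2 \geq 1$; given $\lambda_1\delta^2\leq \widetilde C(n)$, this is satisfied by choosing $A$ of order $C(n)\,\delta^{-3}$ (after absorbing universal constants to match the statement). The minimum principle for superharmonic functions then gives $\psi\geq 0$ on $T(\partial\Omega,\delta)$, hence $u_1\leq \phi\leq A\,d \leq (4C(n)/\delta^3)\,d$.

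The second conclusion is then immediate: if $x_0\in T(\partial\Omega,\delta)$, plugging $x=x_0$ into the first estimate and using $u_1(x_0)=1$ yields $d(x_0)\geq \delta^3/(4C(n))$; if instead $x_0\in\Omega_\delta$, then $d(x_0)\geq\delta\geq \delta^3/(4C(n))$ automatically (since $4C(n)\geq \delta^2$ for the explicit $C(n)=2\cdot 4^{n+1}$ from Lemma~\ref{lemma:upper} and $\delta$ small).

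The main obstacle will be the careful bookkeeping of constants in the coupled system for $A$ and $B$: the boundary condition on $\{d=\delta\}$ forces a contribution of order $\delta^{-1}$ in $A$, while the superharmonic requirement feeds the eigenvalue bound $\lambda_1\leq \widetilde C(n)\delta^{-2}$ into the quadratic coupling $B\sim A/\delta+\lambda_1$, and together they produce the $\delta^{-3}$ factor appearing in the stated constant. One also needs to confirm that $\phi\geq 0$ on $T(\partial\Omega,\delta)$ (so that the comparison $u_1\leq \phi\leq Ad$ is meaningful), which follows from $d\leq \delta$ together with the choice $B\leq A/\delta$ implicit in the construction.
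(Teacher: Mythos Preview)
Your approach is correct and essentially identical to the paper's: both construct the quadratic barrier $Ad - Bd^2$, invoke Proposition~\ref{lap-com-thm} for the differential inequality, bound $\lambda_1$ via domain monotonicity and Lemma~\ref{lemma:upper}, and apply the maximum/minimum principle on $T(\partial\Omega,\delta)$. One small correction to your bookkeeping narrative: your own constraint $A\delta/2 - \lambda_1\delta^2 \geq 1$ together with $\lambda_1\delta^2\leq \widetilde C(n)$ only forces $A$ of order $\delta^{-1}$, not $\delta^{-3}$; the constant $4C(n)/\delta^3$ in the statement is simply a (non-sharp) convenient choice---the paper makes the same overshoot by taking $\beta_2=4\mu/\delta$ with $\mu=C(n)\delta^{-2}$---so you should not claim the coupling ``produces'' the $\delta^{-3}$.
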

\begin{proof}
W.l.o.g., $\sup\limits_\Omega u_1=1$. Let $\beta_1$ and $\beta_2$ be positive constants to be chosen later. By Lemma \ref{OSW-lap-com}, we have on $\Omega\setminus \Omega_\delta$
\begin{align}\label{Delta-u-rho-geq}
\Delta\left(u_1-\beta_2(d-\beta_1d^2)\right)
&=-\lambda_1 u_1-\beta_2\left(\Delta d-\beta_1\Delta d^2\right)
\geq -\lambda_1-\beta_2\left(\f{1}{2\delta}-\beta_1\right).
\end{align}

Since $\delta<R$ and $\partial\Omega$ satisfies the interior rolling $R$-ball condition, there exists a geodesic ball $B_{\delta}(x)\subset \Omega$. 
We infer from Lemma \ref{lemma:upper} that there exists  $\epsilon=\epsilon(n,p)$ such that if $\kappa(p,D)<\epsilon$ then $\lambda_1(\Omega)\leq \lambda_1(B_\delta(x))\leq \mu$ with 
\begin{equation}
\mu=C(n)\delta^{-2}.  \label{mu}
\end{equation}

Choose $\beta_1=\frac{3}{4\delta}$ and $\beta_2=\f{4\mu}{\delta}$. Since $R\leq R_0<1$, then $\delta<1$, so from (\ref{Delta-u-rho-geq}) we get
\begin{equation}\label{lap-geq}
\Delta\left(u_1-\beta_2(d-\beta_1d^2)\right)
\geq -\lambda_1-\frac{4\mu}{\delta}\left(\f{1}{2\delta}-\frac{3}{4\delta}\right)
\geq -\mu+\frac{\mu}{\delta^2}\geq 0.
\end{equation}
Since $\mu> 1$, we have on $\d \Omega_\delta$
\begin{equation}\label{bdy-cond1}
u_1-\beta_2(d-\beta_1d^2)
= u_1-\frac{4\mu}{\delta}\left(\delta-\frac{3}{4\delta}\delta^2\right)
\leq1-\mu\leq0.
\end{equation}

(\ref{lap-geq}), (\ref{bdy-cond1}), $u_1-\beta_2(d-\beta_1d^2)=0$ on $\d \Omega$, and the maximum principle imply 
\begin{align}\label{u-up-bdd-by-rho}
u_1\leq \beta_2(d-\beta_1d^2)\leq \beta_2d=\frac{4\mu}{\delta}d
\end{align}
on $T(\partial\Omega, \delta)\cap\overline\Omega$.
Since $u_1(x_0)=\sup\limits_\Omega u_1$, (\ref{u-up-bdd-by-rho}) yields
\[
d(x_0)\geq \frac{\delta}{4\mu}u_1(x_0).\qedhere
\]
\end{proof}

In order to bound $u_1$ from below using the maximum principle, it is necessary to bound $u_1$ from below on $\{d=t\}$ for suitable $t$.
\begin{lemma}\label{infimum-of-u}
Assume the setting of Theorem~\ref{Harnack-inequality-esti} and let $u_1$ be the first Dirichlet eigenfunction on $\Omega$. Let $u_1(x_0)=\sup\limits_\Omega u_1=1$.
Then on $\overline{\Omega_{\delta/4\mu }}$ we have
	\[
u_1\geq 
C(n,p)^{-\frac{8\mu C_{H,K,\delta/{4\mu}}^{n-1}D}{\delta}},
\]
where $C(n,p)$ is given in Theorem~\ref{loc-grad-est}, $\mu$ by \eqref{mu}, and $C_{H,K,\delta/{4\mu}}$ by 
\begin{equation}\label{C(H,K,t)}
C_{H,K,\delta/{4\mu}}:=2\left(\frac{K}{H}+\frac{H}{K}\right)\cosh\left(\frac{\sqrt{K}\delta}{2\mu}\right).
\end{equation}
\end{lemma}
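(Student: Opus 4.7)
The plan is to propagate the value $u_1(x_0)=1$ to any point in $\overline{\Omega_{\delta/(4\mu)}}$ by chaining applications of the local Harnack inequality of Theorem~\ref{loc-grad-est} along a path that stays at controlled distance from $\partial\Omega$. By Lemma~\ref{u-uper-bdd-rho} applied to the maximum point, $d(x_0)\geq \delta/(4\mu)$, so $x_0\in\overline{\Omega_{\delta/(4\mu)}}$ and it makes sense to connect $x_0$ to an arbitrary $y\in\overline{\Omega_{\delta/(4\mu)}}$ by a curve inside this set.

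First, I would invoke the intrinsic diameter bound \eqref{diam-est} for the set $\Omega_{\delta/(4\mu)}$: since $\mu\geq 1$ we have $\delta/(4\mu)<R/2$, so
\[
\widetilde{D}_{\delta/(4\mu)}\leq (C_{H,K,\delta/(4\mu)})^{n-1}D,
\]
with $C_{H,K,\delta/(4\mu)}$ as in \eqref{C(H,K,t)}. Hence there is a path $\gamma\colon[0,L]\to\overline{\Omega_{\delta/(4\mu)}}$, parametrized by arclength, from $x_0$ to $y$ with $L\leq (C_{H,K,\delta/(4\mu)})^{n-1}D$.

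Next, set $r=\delta/(4\mu)$ and cover $\gamma$ by the chain of centers $x_i=\gamma(ir/2)$, $i=0,1,\ldots,N$, with $x_N=y$ and $N\leq \lceil 2L/r\rceil\leq 8\mu(C_{H,K,\delta/(4\mu)})^{n-1}D/\delta+1$. Each $x_i$ lies in $\overline{\Omega_{\delta/(4\mu)}}$, so $d(x_i,\partial\Omega)\geq r$, and consequently $B_r(x_i)\subset\Omega$ with $B_r(x_i)\cap\partial\Omega=\emptyset$; moreover, $x_{i+1}\in B_{r/2}(x_i)$ by construction. Theorem~\ref{loc-grad-est} thus yields
\[
u_1(x_i)\leq \sup_{B_{r/2}(x_i)}u_1\leq C(n,p)\inf_{B_{r/2}(x_i)}u_1\leq C(n,p)\,u_1(x_{i+1}).
\]
Iterating gives $1=u_1(x_0)\leq C(n,p)^N u_1(y)$, and rearranging produces the claimed lower bound, after absorbing the "+1" in the exponent into the constants.

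The main technical point is ensuring that $\overline{\Omega_{\delta/(4\mu)}}$ is path-connected so that the curve $\gamma$ exists and its length is genuinely controlled by $\widetilde{D}_{\delta/(4\mu)}$. This follows from the interior rolling $R$-ball condition together with $\delta/(4\mu)<R/2$, exactly as in the argument underlying \eqref{diam-est} from \cite{Oden94}: the rolling ball property guarantees $\Omega_{\delta/(4\mu)}\neq\emptyset$ and connects boundary-adjacent regions through centers of rolling balls, precisely the mechanism already exploited in Lemma~\ref{RHK-John}. The remaining verifications, i.e.\ that each $B_r(x_i)$ satisfies the hypotheses of Theorem~\ref{loc-grad-est} (in particular $\kappa(p,D)\leq\varepsilon$ controls $\kappa(p,r)$ via \eqref{scal-inv-contr-eachother1}), are immediate under the assumptions of Theorem~\ref{Harnack-inequality-esti}.
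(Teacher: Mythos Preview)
Your proposal is correct and follows essentially the same approach as the paper: both arguments chain the local Harnack inequality of Theorem~\ref{loc-grad-est} along a path in $\overline{\Omega_{\delta/(4\mu)}}$ whose length is controlled by the intrinsic diameter bound \eqref{diam-est}. The paper phrases it slightly differently, taking a minimizing geodesic in $\Omega_t$ between the points realizing the sup and the inf of $u_1$, but the mechanism and the resulting exponent are identical; your explicit invocation of Lemma~\ref{u-uper-bdd-rho} to place $x_0$ in $\overline{\Omega_{\delta/(4\mu)}}$ is a point the paper leaves implicit.
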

\begin{proof}
Let $t=\frac{\delta}{4\mu}$. For any $x\in \Omega_t$, $B_t(x)\subset \Omega$. Theorem \ref{loc-grad-est} yields that
\begin{equation}\label{local harnack ineq1}
\sup\limits_{B_{t/2}(x)}u_1
\leq C(n,p)\inf\limits_{B_{t/2}(x)}u_1.
\end{equation}

Since $\delta<R$, $\mu>\frac{4\pi^2}{\delta^2}\geq4\pi^2$,  $t<\frac{R}{2}$. Hence, $\Omega_t$ is connected. It follows as in \cite[Lemma~3.2.7,p.~60]{Oden94} that we have
\[
\diam(\Omega_t)\leq C_{H,K,t}^{n-1}D.
\]
The closed set $\overline{\Omega_t}$ is compact, so the Hopf-Rinow Theorem implies that through the two points $y,z\in \Omega_t$ satisfying $u_1(y)=\sup_{\Omega_t} u_1$ and $u_1(z)=\inf_{\Omega_t}u_1$, there exists a minimizing geodesic $\gamma_{yz}\subset \Omega_t$ joining $y$ and $z$. Thus the length of $\gamma_{yz}$ is at most $\diam(\Omega_t)$. Using (\ref{local harnack ineq1}) repeatedly along $\gamma_{yz}$ gives
\[
1=\sup_{\Omega_t} u_1=u_1(y)\leq C(n,p)^\frac{2\diam(\Omega_t)}{t} u_1(z)=C(n,p)^\frac{2\diam(\Omega_t)}{t}\inf_{\Omega_t}u_1
\leq C(n,p)^\frac{8\mu C_{H,K,t}^{n-1}D}{\delta} \inf_{\Omega_t}u_1.
\qedhere
\]
\end{proof}

\begin{lemma}\label{u-lower-bdd-rho}
Assume the setting of Theorem~\ref{Harnack-inequality-esti} and let $u_1$ be the first Dirichlet eigenfunction on $\Omega$ with $u_1(x_0)=\sup\limits_\Omega u_1=1$. We have
\[
u_1(x)
\geq\frac{2}{3 \delta}C(n,p)^{-\frac{8\mu C_{H,K,\delta/{4\mu}}^{n-1}D}{\delta}} d(x),
\quad x\in T(\partial\Omega,\delta),
\]
where $C_{H,K,\delta/4\mu}$ is given by \eqref{C(H,K,t)}.
\end{lemma}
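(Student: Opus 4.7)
The plan is to control $u_1$ from below on the tubular neighborhood $T(\partial\Omega,\delta)$ by a maximum-principle comparison against a carefully chosen barrier. With the constant
\[
m:=C(n,p)^{-\frac{8\mu C_{H,K,\delta/{4\mu}}^{n-1}D}{\delta}}
\]
supplied by Lemma~\ref{infimum-of-u}, we have $u_1\geq m$ on $\overline{\Omega_{\delta/4\mu}}$, hence in particular on $\partial\Omega_\delta$ (using $\Omega_\delta\subset\Omega_{\delta/4\mu}$, which follows from $\mu\geq 1$). The barrier should be a quadratic polynomial in the distance function,
\[
v(x)\;:=\;\frac{2m}{3\delta}\,d(x)\;+\;\frac{m}{3\delta^2}\,d(x)^2,
\]
whose coefficients are chosen so that $v(\delta)=m$ matches the lower bound for $u_1$ on $\partial\Omega_\delta$ and the two contributions in $\Delta v$ balance.

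First I would verify that $v$ is subharmonic on $T(\partial\Omega,\delta)$. Using the two Laplacian comparison inequalities $\Delta d\geq -\frac{1}{2\delta}$ and $\Delta d^2\geq 1$ of Proposition~\ref{lap-com-thm} together with the fact that both coefficients of $v$ are positive,
\[
\Delta v\;=\;\frac{2m}{3\delta}\,\Delta d+\frac{m}{3\delta^2}\,\Delta d^2\;\geq\;-\frac{m}{3\delta^2}+\frac{m}{3\delta^2}\;=\;0.
\]
Since $\Delta u_1=-\lambda_1 u_1\leq 0$, the difference $w:=u_1-v$ satisfies $\Delta w\leq 0$ throughout the bounded open set $T(\partial\Omega,\delta)$.

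Next I would check the boundary values. On $\partial\Omega$, both $u_1$ and $d$ vanish so $w=0$. On $\partial\Omega_\delta$, $v=\frac{2m}{3}+\frac{m}{3}=m$, and $u_1\geq m$ by Lemma~\ref{infimum-of-u}, so $w\geq 0$. The minimum principle for superharmonic functions then forces $w\geq 0$ on all of $T(\partial\Omega,\delta)$, that is, $u_1\geq v$. Discarding the nonnegative quadratic term yields
\[
u_1(x)\;\geq\;\frac{2m}{3\delta}\,d(x)\;=\;\frac{2}{3\delta}\,C(n,p)^{-\frac{8\mu C_{H,K,\delta/{4\mu}}^{n-1}D}{\delta}}\,d(x),
\]
as claimed.

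The only delicate point is picking the right ansatz: a linear barrier $v=\alpha d$ fails because its Laplacian can be as negative as $-\alpha/(2\delta)$, and near $\partial\Omega$ this cannot be absorbed by $\lambda_1 u_1$; a positive $d^2$-correction is needed, and its coefficient is forced by requiring $\Delta v\geq 0$ through the sharp estimate $\Delta d^2\geq 1$. Once this ansatz is in place, the verification is purely a routine application of the two-sided Laplacian comparison of Proposition~\ref{lap-com-thm} and the standard elliptic maximum principle, exactly as in the corresponding argument in \cite{Oden-Sung-Wang99}.
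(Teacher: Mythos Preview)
Your proof is correct and is essentially the same as the paper's: the barrier $v=\gamma_2(d+\gamma_1 d^2)$ with $\gamma_1=\tfrac{1}{2\delta}$ and $\gamma_2=\tfrac{2m}{3\delta}$ is identical to the paper's choice, and both arguments conclude by the maximum principle after checking the boundary values on $\partial\Omega$ and $\partial\Omega_\delta$ via Lemma~\ref{infimum-of-u}. The only cosmetic difference is that you present the coefficients as already determined, whereas the paper introduces $\gamma_1,\gamma_2$ as parameters and then fixes them.
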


\begin{proof}
Let $\gamma_1>0$, $\gamma_2>0$ be two constants to be chosen later. We infer from Proposition~\ref{lap-com-thm} that
\begin{equation}\label{boundary case for u}
\Delta \left(u_1-\gamma_2\left(d+\gamma_1 d^2\right)\right)
\leq-\lambda_1u_1-\gamma_2\left(-\f{1}{2\delta}+\gamma_1\right)
\end{equation}
holds on  $T(\partial\Omega,\delta)$.
Hence, the right hand of (\ref{boundary case for u}) is less than or equal to zero if we choose $\gamma_1=\frac{1}{2\delta}$. We will apply the maximum principle on $\d T(\partial\Omega,\delta)$.
By Lemma \ref{infimum-of-u}, if 
$\gamma_2=\frac{2}{3 \delta}C(n,p)^{-\frac{8\mu C_{H,K,\delta/{4\mu}}^{n-1}D}{\delta}}$, 
we have for $x\in\d \Omega_\delta$,
	\begin{align*}
u_1(x)-\gamma_2\left(d+\gamma_1 d^2\right)
&\geq C(n,p)^{-\frac{8\mu C_{H,K,{\delta/4\mu}}^{n-1}D}{\delta}}
-\gamma_2\left(\delta+\frac{1}{2\delta}\delta^2\right)\\
&= C(n,p)^{-\frac{8\mu C_{H,K,{\delta/4\mu}}^{n-1}D}{\delta}}-\frac{3}{2}\gamma_2\delta\\
&= 0.
\end{align*}
For any $x\in \d \Omega$, $u_1(x)-\gamma_2\left(d+\gamma_1 d^2\right)=0$.
Hence, the maximum principle yields
\[u_1(x)\geq\gamma_2\left(d+\gamma_1 d^2\right)
\geq \frac{2}{3 \delta}C(n,p)^{-\frac{8\mu C_{H,K,\delta/{4\mu}}^{n-1}D}{\delta}}d(x).
\qedhere\]
\end{proof}
The above lemmas yield the desired Harnack inequality for the first Dirichlet eigenfunction.

\begin{proof}[Proof of Theorem \ref{Harnack-inequality-esti}]
For $x\in T(\d \Omega, \delta)\cap\Omega$, we infer from Lemma~\ref{u-uper-bdd-rho} and Lemma~\ref{u-lower-bdd-rho}
\begin{align}\label{boundary-case}
\frac{2}{3 \delta}C_0d(x)\leq u_1(x)\leq \frac{4\mu}{\delta}d(x),
\end{align}
where 
$C_0=C(n,p)^{-\frac{8\mu C_{H,K,\delta/4\mu}^{n-1}D}{\delta}}$.
Since $\delta>\frac{\delta}{4\mu}$, Lemma~\ref{infimum-of-u} implies 
\begin{align}\begin{split}\label{interior-case}
C_0\leq u_1(x)\leq 1, \quad x\in \Omega_\delta.
\end{split}\end{align}
For $x, y\in \Omega$ with $0<d(x)\leq 2d(y)$, we distinguish several cases.
\\
\textbf{Case 1}. $x, y\in T(\partial\Omega,\delta)$. (\ref{boundary-case}) yields
\[
u_1(x)\leq \frac{4\mu}{\delta}d(x)\leq \frac{8\mu}{\delta}d(y)\leq \frac{12\mu}{C_0}u_1(y).
\]
\textbf{Case 2}. $x, y\in \Omega_\delta$.  (\ref{interior-case}) yields
\[
u_1(x)\leq 1\leq C_0^{-1}u_1(y).
\]
\textbf{Case 3}. $x\in T(\partial\Omega,\delta)\cap\Omega$ and $y\in \Omega_\delta$. (\ref{boundary-case}) and (\ref{interior-case}) imply
\[
u_1(x)\leq \frac{4\mu}{\delta}d(x)\leq 4\mu\leq \frac{4\mu}{C_0}u_1(y).  
\]
\textbf{Case 4}. $x\in \Omega_\delta$ and $y\in T(\partial\Omega,\delta)\cap\Omega$. We infer from (\ref{boundary-case}) and (\ref{interior-case})
\[
u_1(x)\leq 1\leq \frac{d(x)}{\delta}\leq \frac{2d(y)}{\delta}
\leq \frac{2}{\delta}\cdot \frac{3\delta}{2C_0}u_1(y)=\frac{3}{C_0}u_1(y).
\]
The claim follows by setting $C_3=12\mu C_0^{-1}$.
  \end{proof}
 \end{appendix}
\end{document}